\newcommand{\alp}{\alpha}
\newcommand{\del}{\delta}
\newcommand{\eps}{\varepsilon}
\newcommand{\lam}{\lambda}
\newcommand{\ome}{\omega}
\newcommand{\zet}{\zeta}
\newcommand{\Gam}{\Gamma}
\newcommand{\Del}{\Delta}
\newcommand{\R}{{\mathbb R}}
\newcommand{\cI}{{\mathcal I}}
\newcommand{\cF}{{\mathcal F}}
\newcommand{\prt}{\partial}
\renewcommand{\(}{\left(}
\renewcommand{\)}{\right)}
\newcommand{\lfl}{\left\lfloor}
\newcommand{\rfl}{\right\rfloor}
\newcommand{\lcl}{\left\lceil}
\newcommand{\rcl}{\right\rceil}
\newcommand{\seq}{\subseteq}
\newcommand{\stm}{\setminus}
\newcommand{\est}{\varnothing}
\newcommand{\longc}{,\dotsc,}
\newcommand{\longp}{+\dotsb+}
\newcommand{\longm}{-\dotsb-}
\newcommand{\longe}{=\dotsb=}
\newcommand{\mmod}[1]{\!\!\pmod{#1}}
\newcommand{\bfr}[2]{\(\frac{#1}{2^{#2}}\)}
\newcommand{\tfr}[2]{\(\frac{#1}{3^{#2}}\)}
\newtheorem{lemma}{Lemma}
\newtheorem{theorem}{Theorem}
\newtheorem{corollary}{Corollary}
\newtheorem{proposition}{Proposition}
\theoremstyle{definition}
\newtheorem{example}{Example}
\newcommand{\refc}[1]{\ref{c:#1}}
\newcommand{\reft}[1]{\ref{t:#1}}
\newcommand{\refl}[1]{\ref{l:#1}}
\newcommand{\refp}[1]{\ref{p:#1}}
\newcommand{\refs}[1]{\ref{s:#1}}
\newcommand{\refb}[1]{\cite{b:#1}}
\newcommand{\refe}[1]{\eqref{e:#1}}
\author{Vsevolod F. Lev}
\email{seva@math.haifa.ac.il}
\address{Department of Mathematics, The University of Haifa at Oranim,
         Tivon 36006, Israel}
\title[Edge-isoperimetric problem for Cayley graphs]
      {Edge-isoperimetric problem \\
       for Cayley graphs \\
       and generalized Takagi function}
\subjclass[2010]%
  {Primary: 05C35;    % Comb -> Graph Theory -> Extremal Problems
   secondary: 26A30,  % Real Func -> One Variable -> Singular Functions ...
              26A51,  %   Convexity, Generalizations
              26B25,  %   Convexity, Generalizations
              39B62.} % Func. Ineqs, including subadditivity, convexity etc
\keywords{Edge-isoperimetric problem, edge-isoperimetric inequalities,
  Takagi function, convexity.}
\begin{document}
\baselineskip=16pt

\begin{abstract}
Let $G$ be a finite abelian group of exponent $m\ge 2$. For subsets $A,S\seq
G$, denote by $\prt_S(A)$ the number of edges from $A$ to its complement
$G\stm A$ in the directed Cayley graph, induced by $S$ on $G$. We show that
if $S$ generates $G$, and $A$ is non-empty, then
  $$ \textstyle \prt_S(A) \ge \frac{e}m\,|A|\ln\frac{|G|}{|A|}\, . $$
Here the coefficient $e=2.718\ldots$ is best possible and cannot be replaced
with a number larger than $e$.

For homocyclic groups $G$ of exponent $m$, we find an explicit closed-form
expression for $\prt_S(A)$ in the case where $S$ is the ``standard''
generating subset of $G$, and $A$ is an initial segment of $G$ with respect
to the lexicographic order, induced by $S$. Namely, we show that in this
situation
  $$ \prt_S(A) = |G|\,\ome_m(|A|/|G|), $$
where $\ome_2$ is the Takagi function, and $\ome_m$ for $m\ge3$ is an
appropriate generalization thereof. This particular case is of special
interest, since for $m\in\{2,3,4\}$ it is known to yield the smallest
possible value of $\prt_S(A)$, over all sets $A\seq G$ of given size. We give
this classical result a new proof, somewhat different from the standard one.

We also give a new, short proof of the Boros-P\'ales inequality
  $$ \textstyle
     \ome_2\(\frac{x+y}2\) \le \frac{\ome_2(x)+\ome_2(y)}2 + \frac12\,|y-x|, $$
establish an extremal characterization of the Takagi function as the
(pointwise) maximal function, satisfying this inequality and the boundary
condition $\max\{\ome_2(0),$ $\ome_2(1)\}\le 0$, and obtain similar results
for the $3$-adic analog $\ome_3$ of the Takagi function.
\end{abstract}

\maketitle

\section{Introduction: summary of results and background}
The three tightly related objects of study in this paper are the
edge-isoperimetric problem on Cayley graphs, a sequence of Takagi-style
functions, and classes of functions satisfying a certain kind of convexity
condition.

The edge-isoperimetric problem for a graph $\Gam$ on the vertex set $V$ is to
find, for every non-negative integer $n\le |V|$, the smallest possible number
of edges between an $n$-element set of vertices and its complement in $V$.
This classical problem has received much attention in the literature; for the
history, results, variations, and numerous related problems, the reader can
refer to the survey of Bezrukov \refb{be} or the monograph of Harper
\refb{h}.

In the present paper we are concerned with, arguably, the most studied case
where $\Gam$ is a Cayley graph. We use the following notation. Given two
subsets $S,A\seq G$ of a finite abelian group $G$, by $\Gam_S(G)$ we denote
the (directed) Cayley graph, induced by $S$ on $G$, and we write $\prt_S(A)$
for the number of edges in $\Gam_S(G)$ from an element of $A$ to an element
in its complement $G\stm A$; that is,
  $$ \prt_S(A) := |\{ (a,s)\in A\times S\colon a+s\notin A \}|. $$

It is easily seen that if $S$ is symmetric (meaning that $S=-S$, where
$-S:=\{-s\colon s\in S\}$), then $\prt_S(A)$ can be equivalently defined as
the number of edges of the corresponding \emph{undirected} Cayley graph, with
one of the incident vertices in $A$ and another one in $G\stm A$. As a less
trivial fact, we have
  $$ \prt_{-S}(A)=\prt_S(G\stm A)=\prt_S(A); $$
consequently, if $S$ is anti-symmetric (that is, $S\cap(-S)=\est$), then
$\prt_S(A)$ is half the number of edges, joining a vertex from $A$ with a
vertex from $G\stm A$, in the undirected Cayley graph, induced on $G$ by the
set $S\cup(-S)$. We omit detailed explanations since none of these
observations are used below.

\begin{comment}
It is easily seen that the \emph{undirected} Cayley graph, induced on $G$ by
the set $S\cup(-S)$, is obtained from $\Gam_S(G)$ by erasing the orientation
of the edges and replacing multiple edges with a single edge. Consequently,
$\prt_S(A)$ can be equivalently defined as the number of edges in this
undirected graph (induced by $S\cup(-S)$) with one of the incident vertices
in $A$ and another one in $G\stm A$. Therefore, we have
  $$ \prt_{-S}(A)=\prt_S(G\stm A)=\prt_S(A). $$
\end{comment}

Up until now, all the research we are aware of has focused on particular
families of Cayley graphs. In contrast, our first principal result addresses
the general situation.
\begin{theorem}\label{t:main1}
Let $m\ge 2$ be an integer, and suppose that $G$ is a finite abelian group,
the exponent of which divides $m$. Then for any non-empty subset $A\seq G$
and any generating subset $S\seq G$ we have
  $$ \prt_S(A) \ge \frac em\,|A| \ln \frac{|G|}{|A|} $$
(where $e=2.718...$ is Euler's number).
\end{theorem}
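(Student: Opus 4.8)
The plan is to reduce the problem to a one–dimensional (cyclic) estimate, applied simultaneously in the direction of every generator, and then to glue these estimates together by an entropy inequality. Writing $\prt_S(A)=\sum_{s\in S}\prt_s(A)$ with $\prt_s(A):=|\{a\in A\colon a+s\notin A\}|$, I would decompose each $\prt_s(A)$ over the cosets of $\langle s\rangle$. On one such coset $C$ (a cycle of length $d_s:=|\langle s\rangle|$ under translation by $s$) the trace $A\cap C$ splits into arcs, and $\prt_s$ counts exactly the number of arcs, which is at least $1$ as soon as $A$ meets $C$ without filling it. Since $t\mapsto t\ln(d_s/t)$ attains its maximum $d_s/e$ at $t=d_s/e$, we have $\frac em\,k\ln(d_s/k)\le\frac e{d_s}\,k\ln(d_s/k)\le1$ for $0<k<d_s\le m$, so the arc count on $C$ is bounded below by $\frac em\,k_C\ln(d_s/k_C)$, where $k_C:=|A\cap C|$ (trivially so when $A\cap C$ is empty or all of $C$). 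This is exactly the step that forces the constant $e$: for $G=\mathbb Z/m$, $S=\{1\}$ and $A$ an interval of length $\approx m/e$ the estimate becomes an asymptotic equality, which is what rules out any constant larger than $e$.

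Summing over all cosets and generators, and introducing a random variable $X$ uniform on $A$ together with the quotient maps $\pi_s\colon G\to G/\langle s\rangle$, this becomes
$$ \prt_S(A)\ \ge\ \frac{e|A|}{m}\sum_{s\in S}\bigl(\ln d_s-H(X\mid\pi_sX)\bigr), $$
because $\frac1{|A|}\sum_C k_C\ln k_C=H(X\mid\pi_sX)$ is the conditional entropy of $X$ given its coset modulo $\langle s\rangle$. As $H(X)=\ln|A|$, the theorem is now equivalent to the information–theoretic inequality
$$ \sum_{s\in S}\bigl(\ln d_s-H(X\mid\pi_sX)\bigr)\ \ge\ \ln|G|-H(X). \qquad (\star) $$
Dropping generators only decreases the left-hand side, so it suffices to prove $(\star)$ for a generating \emph{chain}: order a generating subset $s_1\longc s_r$ so that the subgroups $H_i:=\langle s_1\longc s_i\rangle$ strictly increase from $H_0=\{0\}$ to $H_r=G$.

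To establish $(\star)$ along the chain I would use submodularity of the quotient–entropy function $h(K):=H(\pi_KX)$ on the lattice of subgroups, namely $h(K+L)+h(K\cap L)\le h(K)+h(L)$, together with $h(\{0\})=H(X)$ and $h(G)=0$. Applying it to $K=H_{i-1}$ and $L=\langle s_i\rangle$ (so that $K+L=H_i$ and $K\cap L=:M_i$) and telescoping gives $\sum_i h(\langle s_i\rangle)\ge\sum_i h(M_i)-H(X)$. Feeding this into $(\star)$ and using the index identity $d_{s_i}=e_i\,|M_i|$, where $e_i:=[H_i:H_{i-1}]$ and $\prod_ie_i=|G|$, reduces everything to the pointwise bound $h(M_i)+\ln|M_i|\ge H(X)$, which is just $H(X)=h(M_i)+H(X\mid\pi_{M_i}X)\le h(M_i)+\ln|M_i|$. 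Summing over $i$ closes the argument, and in the homocyclic case (all $d_{s_i}=m$ and $M_i=\{0\}$) the whole chain collapses to Han's inequality $\sum_iH(X_i\mid X_{-i})\le H(X)$, recovering equality from the extremal cyclic pieces.

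The main obstacle I anticipate is precisely the passage from independent to dependent generators, i.e.\ the appearance of the nontrivial intersections $M_i=H_{i-1}\cap\langle s_i\rangle$. For a homocyclic group with its standard generators the $\langle s_i\rangle$ are independent, all $M_i$ are trivial, and $(\star)$ is Han's inequality; for an arbitrary generating set no independent generating subset need exist, and the meets must be carried through the computation. The two ingredients that make this work — submodularity of $K\mapsto H(\pi_KX)$ and the bookkeeping identity $\ln d_{s_i}=\ln e_i+\ln|M_i|$ balancing the lost entropy $h(M_i)$ against the order surplus — are where I expect the real work to lie; verifying submodularity of the quotient–entropy function cleanly (for instance via the observation that the pair $(\pi_KX,\pi_LX)$ determines $\pi_{K\cap L}X$) is the technical heart of the proof.
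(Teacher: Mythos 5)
Your argument is correct, and it takes a genuinely different route from the paper's. The paper deduces Theorem~\reft{main1} from the general inequality $\prt_S(A)\ge\frac1m\,|G|\,f(|A|/|G|)$, valid for every $f$ in the relaxed-convexity class $\cF_m$ (Theorem~\reft{isoper}, proved by induction on $|G|$: one generator $s_0$ is split off, $A$ is sliced along the cosets of $H=\langle S\stm\{s_0\}\rangle$, the cross-coset edges are bounded through Lemma~\refl{absum}, and condition \refe{m} is applied to the slice densities), and then verifies $ex\ln(1/x)\in\cF_m$ by a calculus argument (Proposition~\refp{exln1x} via Lemma~\refl{concave-in-Fm}). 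You bypass the class $\cF_m$ altogether: the per-coset arc count combined with $\max_t t\ln(d/t)=d/e$ --- the same extremal feature that powers the paper's proof, where $e$ enters through $\max_{[0,1]}ex\ln(1/x)=1$ --- reduces the theorem to your entropy inequality $(\star)$, and your proof of $(\star)$ is sound. Checking the pieces: the identity $\frac1{|A|}\sum_C k_C\ln k_C=H(X\mid\pi_sX)$ is right for $X$ uniform on $A$; each summand $\ln d_s-H(X\mid\pi_sX)$ is nonnegative (conditioned on its coset, $X$ takes at most $d_s$ values), so passing to a generating chain is legitimate; submodularity of $h(K)=H(\pi_KX)$ holds exactly as you indicate, since $(x+K)\cap(x+L)=x+(K\cap L)$ makes $\pi_{K\cap L}X$ a function of the pair $(\pi_KX,\pi_LX)$ while $\pi_{K+L}X$ is a function of each coordinate separately, whence $H(\pi_KX)+H(\pi_LX)=H(\pi_KX,\pi_LX)+I(\pi_KX;\pi_LX)\ge h(K\cap L)+h(K+L)$ by data processing; the second isomorphism theorem gives $d_{s_i}=e_i|M_i|$ with $\prod_i e_i=|G|$; and the endgame $H(X)\le h(M_i)+\ln|M_i|$ is the chain rule plus $H(X\mid\pi_{M_i}X)\le\ln|M_i|$. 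As for what each route buys: the paper's machinery proves much more than Theorem~\reft{main1} --- it yields $\prt_S(A)\ge\frac1m\,|G|\,F_m(|A|/|G|)$ for the pointwise-maximal function $F_m$, which for $m\in\{2,3,4\}$ equals $m\ome_m$ and gives the sharp Takagi-function bound recovering \refe{min-prt}; your per-coset input (arc count at least $1$) is too lossy for that level of precision, since it wastes everything above the first arc. In exchange, your proof is self-contained and local: no induction on the group, no functional class, the one-dimensional step even holds with $e/d_s$ in place of $e/m$, and it makes explicit the information-theoretic content (Han's inequality in the homocyclic case) that remains implicit in the paper's coset-slicing induction.
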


The estimate of Theorem~\reft{main1} is sharp in the sense that the
coefficient $e$ cannot be replaced with a larger number.
\begin{example}
For integer $r\ge 1$ and $m\ge 2$, let $G$ be the homocyclic group of
exponent $m$ and rank $r$. Fix arbitrarily a generating subset $S=\{s_1\longc
s_r\}\seq G$ and integer $k\in[1,r]$ and $t\in[1,m-1]$, and consider the set
  $$ A := \{x_1s_1\longp x_rs_r\colon 0\le x_1\longc x_k\le t-1,\,
                                  0\le x_{k+1}\longc x_r\le m-1 \}. $$
Write $\alp:=t/m$. Then $|A|=t^km^{r-k},\,\ln(|G|/|A|)=k\ln(1/\alp)$, and
  $$ \prt_S(A) = kt^{k-1}m^{r-k} = \frac{c(\alp)}{m}\,|A|\,\ln\frac{|G|}{|A|}, $$
where $c(\alp)=\frac{1/\alp}{\ln(1/\alp)}$ can be made arbitrarily close to
$e$ by choosing $t$ and $m$ appropriately.
\end{example}

The proof of Theorem~\reft{main1} and most of other results, presented in the
Introduction, is postponed to subsequent sections.

Below we use the standard notation $C_m^r$ for the homocyclic group of
exponent $m$ and rank $r$. In the case where $m\in\{2,3,4\}$, and $S\seq
C_m^r$ is a generating set of size $|S|=r$, the minimum of $\prt_S(A)$
over all sets $A$ of prescribed size is known to be realized when $A$ is
the set of the lexicographically smallest group elements; this basic fact
due to Harper \refb{h1} (the case $m\in\{2,4\}$) and Lindsey \refb{li}
(the case $m=3$) follows also from our present results, as explained
below. To put the things formally, for a finite, totally ordered set $T$
and a non-negative integer $n\le|T|$, denote by $\cI_n(T)$ the length-$n$
initial segment of $T$; that is, the set of the $n$ smallest elements of
$T$. Consider the group $C_m^r$ along with a fixed generating subset
$S\seq C_m^r$ of size $|S|=r$. We assume that $S$ is totally ordered,
inducing a lexicographic order on $C_m^r$; thus, $\cI_n(C_m^r)$ is the
set of the $n$ lexicographically smallest elements of $C_m^r$. As we have
just mentioned, if $m\in\{2,3,4\}$, then
\begin{equation}\label{e:min-prt}
  \min \{ \prt_S(A) \colon A\seq C_m^r,\ |A|=n \}
                                = \prt_S(\cI_n(C_m^r)),\quad 0\le n\le m^r.
\end{equation}

Surprisingly, to our knowledge, no explicit closed-form expression for the
quantity $\prt_S(\cI_n(C_m^r))$ has ever been obtained. We show that such an
expression can be given in terms of the Takagi function for $m=2$, and an
appropriate $m$-adic version thereof for $m\ge 3$.

For real $x$, let $\|x\|$ denote the distance from $x$ to the nearest
integer. The Takagi function, first introduced by Teiji Takagi in 1903 as an
example of an everywhere
continuous but nowhere differentiable function, is defined by%
  $$ \ome(x) := \sum_{k=0}^\infty 2^{-k} \|2^kx\|. $$
Numerous remarkable properties of this function, applications, and relations
in various fields of mathematics can be found in the recent survey papers by
Lagarias \refb{la} and Allaart-Kawamura \refb{ak}. For the generalization we
need, for real $x$ and $\alp$ let $\|x\|_\alp:=\min\{\|x\|,\alp\}$ (the
distance from $x$ to the nearest integer, truncated at $\alp$), and set
\begin{equation}\label{e:omega-m-def}
  \ome_m(x) := \sum_{k=0}^\infty m^{-k} \|m^kx\|_{1/m},\quad m\ge 2.
\end{equation}
(Thus, $\ome_2$ is just the regular Takagi function.) Since the series in
\refe{omega-m-def} is uniformly convergent, the functions $\ome_m$ are
well-defined and continuous on the whole real line. Furthermore, they are
even functions, periodic with period $1$, vanishing at integers, strictly
positive for non-integer values of the argument, and satisfying
\begin{equation}\label{e:max-omega}
  \max \ome_m \le \sum_{k=0}^\infty m^{-(k+1)} = \frac1{m-1}.
\end{equation}

The reader is invited to compare our second major result against
\refe{min-prt}.
\begin{theorem}\label{t:lex}
For integer $r\ge 1$ and $m\ge 2$, let $S$ be an $r$-element generating
subset of the homocyclic group $C_m^r$. Suppose that an ordering of $S$ is
fixed, inducing a lexicographic ordering of $C_m^r$. Then for any
non-negative integer $n\le m^r$, the set $\cI_n(C_m^r)$ of the $n$
lexicographically smallest elements of $C_m^r$ satisfies
  $$ \prt_S(\cI_n(C_m^r))=m^r\ome_m(n/m^r). $$
\end{theorem}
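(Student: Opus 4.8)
The plan is to argue by induction on the rank $r$, matching a combinatorial recursion for $\prt_S(\cI_n(C_m^r))$ against a self-similarity relation for $\ome_m$. Since $|S|=r$ and $S$ generates $C_m^r$, the elements $s_1\longc s_r$ form a basis, so every group element is uniquely $x_1s_1\longp x_rs_r$ with $x_i\in\{0\longc m-1\}$; I will take the fixed ordering so that $s_1$ is the most significant coordinate, whence $\cI_n(C_m^r)$ consists of all elements whose coordinate vector $(x_1\longc x_r)$, read as the base-$m$ integer $x_1m^{r-1}\longp x_r$, is less than $n$. On the analytic side, splitting off the $k=0$ term in \refe{omega-m-def} and using that $m^{k-1}j$ is an integer for $k\ge1$ yields the self-similarity relation
  $$ \ome_m\!\(\frac{j+t}m\) = \Big\|\frac{j+t}m\Big\|_{1/m} + \frac1m\,\ome_m(t), \qquad j\in\{0\longc m-1\},\ t\in[0,1], $$
which is the engine of the whole argument.

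For the inductive step I would write $n=qm^{r-1}+n'$ with $q\in\{0\longc m-1\}$ and $0\le n'<m^{r-1}$ (the case $n=m^r$ being trivial, as both sides vanish). Partitioning $C_m^r$ into the $m$ ``blocks'' $B_j:=\{x_1=j\}$, the set $\cI_n$ is the union of the full blocks $B_0\longc B_{q-1}$, the length-$n'$ initial segment of $B_q$, and nothing from $B_{q+1}\longc B_{m-1}$. I would then split the edges counted by $\prt_S$ according to whether the generator used is $s_1$ or one of $s_2\longc s_r$. The generators $s_2\longc s_r$ fix $x_1$, so they produce no edges out of a full block; the only edges they create issue from the partial block $B_q$, which together with $\{s_2\longc s_r\}$ is a copy of $C_m^{r-1}$ carrying its own lexicographic initial segment of length $n'$. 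By the induction hypothesis (anchored at $r=1$, where there are no such generators) this contributes exactly $m^{r-1}\ome_m(n'/m^{r-1})$.

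It remains to count the $s_1$-edges. Here I would view $C_m^r$ as $m^{r-1}$ disjoint ``columns'' $\{(j,\mathbf y)\colon j\in\{0\longc m-1\}\}$ indexed by the lower coordinates $\mathbf y\in C_m^{r-1}$; each column is a copy of $C_m$ under $s_1$, and $\cI_n$ meets it in the initial segment $\{0\longc q\}$ when $\mathbf y$ is among the first $n'$ vectors and in $\{0\longc q-1\}$ otherwise. Applying the trivial $r=1$ count (an initial segment of $\ell$ elements in $C_m$ has exactly $\mathbf 1[0<\ell<m]$ outgoing $s_1$-edges) column by column gives a total of $n'\cdot\mathbf 1[q<m-1]+(m^{r-1}-n')\cdot\mathbf 1[q\ge1]$ $s_1$-edges. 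Writing $t:=n'/m^{r-1}$ and checking the three ranges $q=0$, $1\le q\le m-2$, and $q=m-1$, I would verify that this equals precisely $m^r\,\|(q+t)/m\|_{1/m}$, the three ranges corresponding exactly to the rising, flat, and falling pieces of $\|\cdot\|_{1/m}$ on $[0,1]$. Adding the two contributions and invoking the self-similarity relation with $x=(q+t)/m=n/m^r$ closes the induction. The main obstacle is this last bookkeeping: the cyclic wrap-around of $s_1$ at the block boundary (the vanishing of edges when $q=m-1$, and the absence of a $B_{q-1}$ term when $q=0$) must be handled so that the column count lines up exactly with the case split of $\|\cdot\|_{1/m}$; everything else is routine.
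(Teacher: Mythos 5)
Your proof is correct, but it runs the induction along the opposite end of the generating set from the paper, and this changes the analytic ingredient needed. The paper splits off the \emph{smallest} (least significant) generator $s_0$: the initial segment then meets each of the $m$ cosets of $H=\langle S\stm\{s_0\}\rangle$ in an initial segment of nearly equal length $t$ or $t+1$ (where $n=tm+\rho$), the $s_0$-edges are counted as $n_0-n_{m-1}\in\{0,1\}$ via the nestedness of the slices, and the induction closes with Lemma~\refl{takagi}, the two-point recursion $\ome_m(n/m^r)=(1-\frac{\rho}m)\ome_m(t/m^{r-1})+\frac{\rho}m\,\ome_m((t+1)/m^{r-1})+\frac1{m^r}$, whose proof itself requires the linearity argument for $\|\cdot\|_{1/m}$ on the interval $(m^{k+1-r}t,\,m^{k+1-r}(t+1))$. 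You instead split off the \emph{most significant} generator, so the segment decomposes into $q$ full blocks plus one partial block carrying a single sub-segment of length $n'$; your $s_1$-edge count is done column-by-column and matched against the three linear pieces of $\|\cdot\|_{1/m}$, and the induction closes with the one-step self-similarity $\ome_m\bigl((q+t)/m\bigr)=\|(q+t)/m\|_{1/m}+\frac1m\,\ome_m(t)$, which is immediate from the series definition (it is \refe{no-time-to-think} with $k=0$ together with periodicity \refe{par-per}). I checked your bookkeeping: the column count $n'\cdot\mathbf{1}[q<m-1]+(m^{r-1}-n')\cdot\mathbf{1}[q\ge1]$ does equal $m^r\|(q+t)/m\|_{1/m}$ in all three ranges of $q$, including the cyclic wrap-around at $q=m-1$ and the degenerate middle range when $m=2$. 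What each approach buys: yours needs only an identity that falls out of the definition of $\ome_m$, with a single application of the induction hypothesis per step, and handles the wrap-around explicitly; the paper's keeps all $m$ slices structurally uniform (which makes the $s_0$-edge count a one-liner), mirrors the coset-slicing used later in the proof of Theorem~\reft{isoper}, and reuses Lemma~\refl{takagi}, which the paper must develop anyway for the Boros--P\'ales inequality, Theorem~\reft{bp3}, and Theorem~\reft{Fm}.
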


We remark that for $m\in\{2,3,4\}$, Theorem~\reft{lex} together with
\refe{min-prt} and continuity of $\ome_m$ readily shows that for any fixed
$x\in(0,1)$, if $n_r=(1+o(1))m^rx$ as $r\to\infty$, then
  $$ \min \{ \prt_S(A) \colon A\seq C_m^r,\ |A|=n_r \}
                                               = (1+o(1))\, m^r \ome_m(x). $$
The particular case $m=2$ and $n_r=\lfl 2^rx\rfl$ is the main result of
\refb{g}.

In the Appendix we establish some estimates for the growth rate of the
functions $\ome_m$: specifically, we show that
\begin{align}
  x\log_2(1/x) \le\, & \ome_2(x) \le x\log_2(4/3x), \label{e:ome2-bounds} \\
  x\log_3(1/x) \le\, & \ome_3(x) \le x\log_3(3/2x), \label{e:ome3-bounds} \\
  x\log_4(1/x) \le\, & \ome_4(x) \le x\log_4(4/3x), \label{e:ome4-bounds} \\
\intertext{and for $m\ge 5$,}
  x\log_m(e/mx) \le\, & \ome_m(x) \le x\log_m(3/2x), \label{e:omem-bounds}
\end{align}
for any $x\in(0,1]$. We notice that estimates
\refe{ome2-bounds}--\refe{ome4-bounds} are sharp: the lower bound in
\refe{ome2-bounds} and \refe{ome4-bounds} is attained for $x=2^{-k}$ and the
upper bound for $x=2^{1-k}/3$, the lower bound in \refe{ome3-bounds} is
attained for $x=3^{-1-k}$ and the upper bound for $x=3^{-k}/2$, for any
integer $k\ge 0$. In contrast, the estimate \refe{omem-bounds} is not sharp;
it is provided, essentially, as a ``proof of concept'' and can easily be
improved. However, as $x\to 0$, the lower and upper bounds in
\refe{omem-bounds} coincide up to lower-order terms, and it may well be
impossible to obtain both sharp and explicit bounds of this sort for
 $m\ge 5$.

The graphs of the functions $\ome_2,\ome_3$, and $\ome_5$, along with the
functions representing the corresponding lower and upper bounds, are shown in
Figure 1.

\begin{figure}[h]
\centering
\includegraphics[trim=1.5mm 1.5mm 1.5mm 1.5mm,clip,
  height=0.31\textwidth,width=0.25\textwidth,angle=-90]{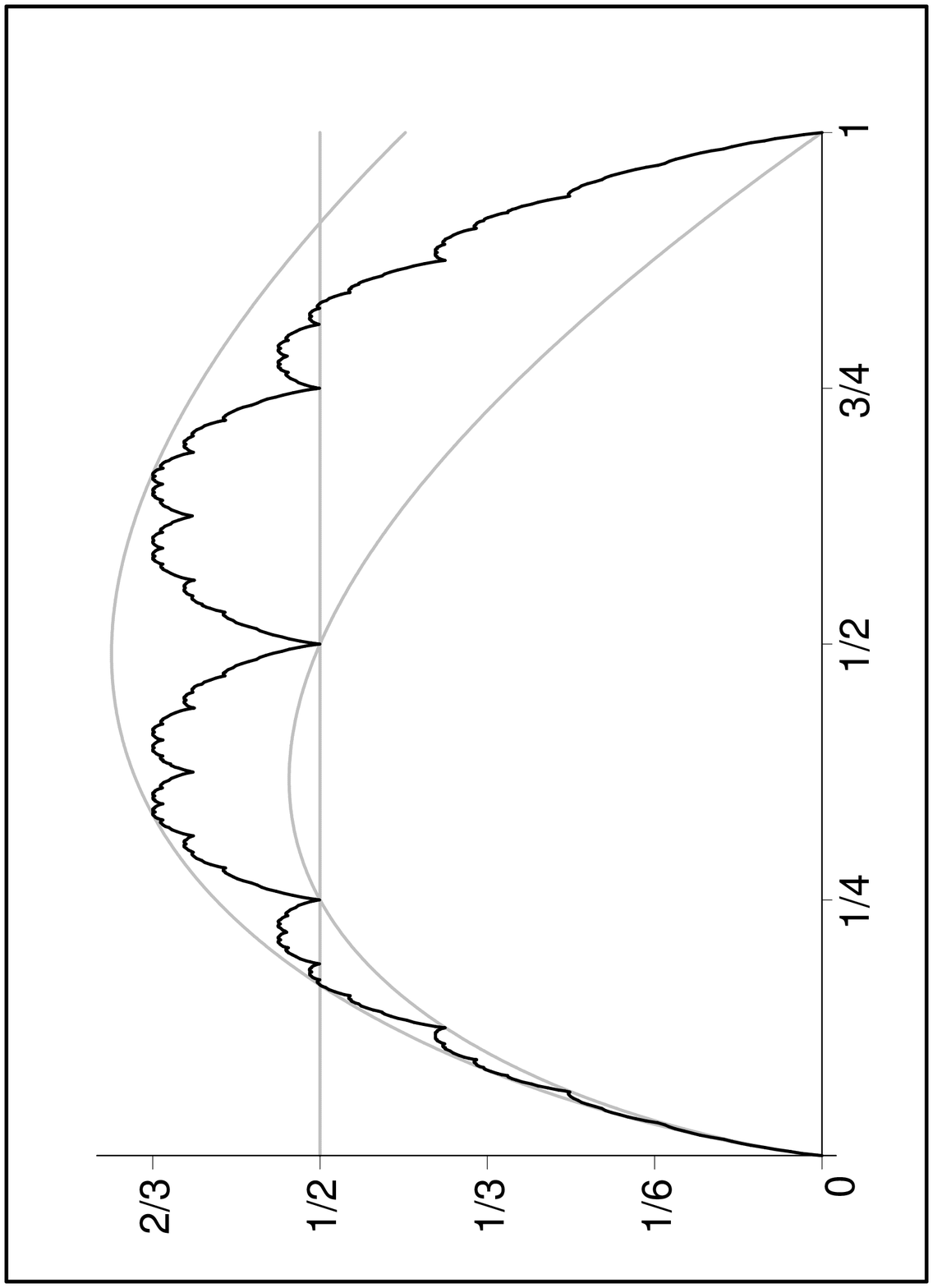}
\hskip .25cm
\includegraphics[trim=1.5mm 1.5mm 1.5mm 1.5mm,clip,
  height=0.31\textwidth,width=0.25\textwidth,angle=-90]{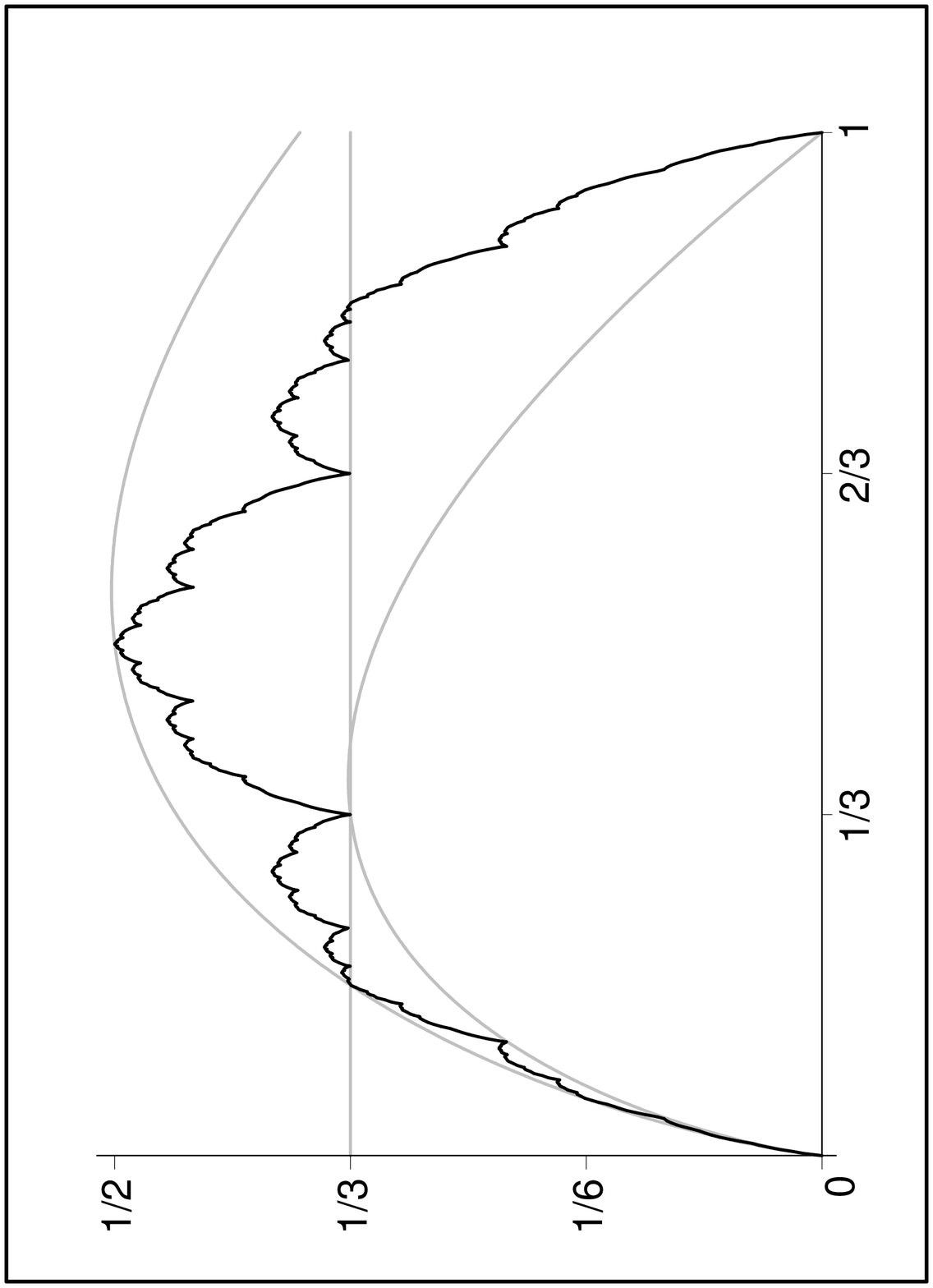}
\hskip .25cm
\includegraphics[trim=1.5mm 1.5mm 1.5mm 1.5mm,clip,
  height=0.31\textwidth,width=0.25\textwidth,angle=-90]{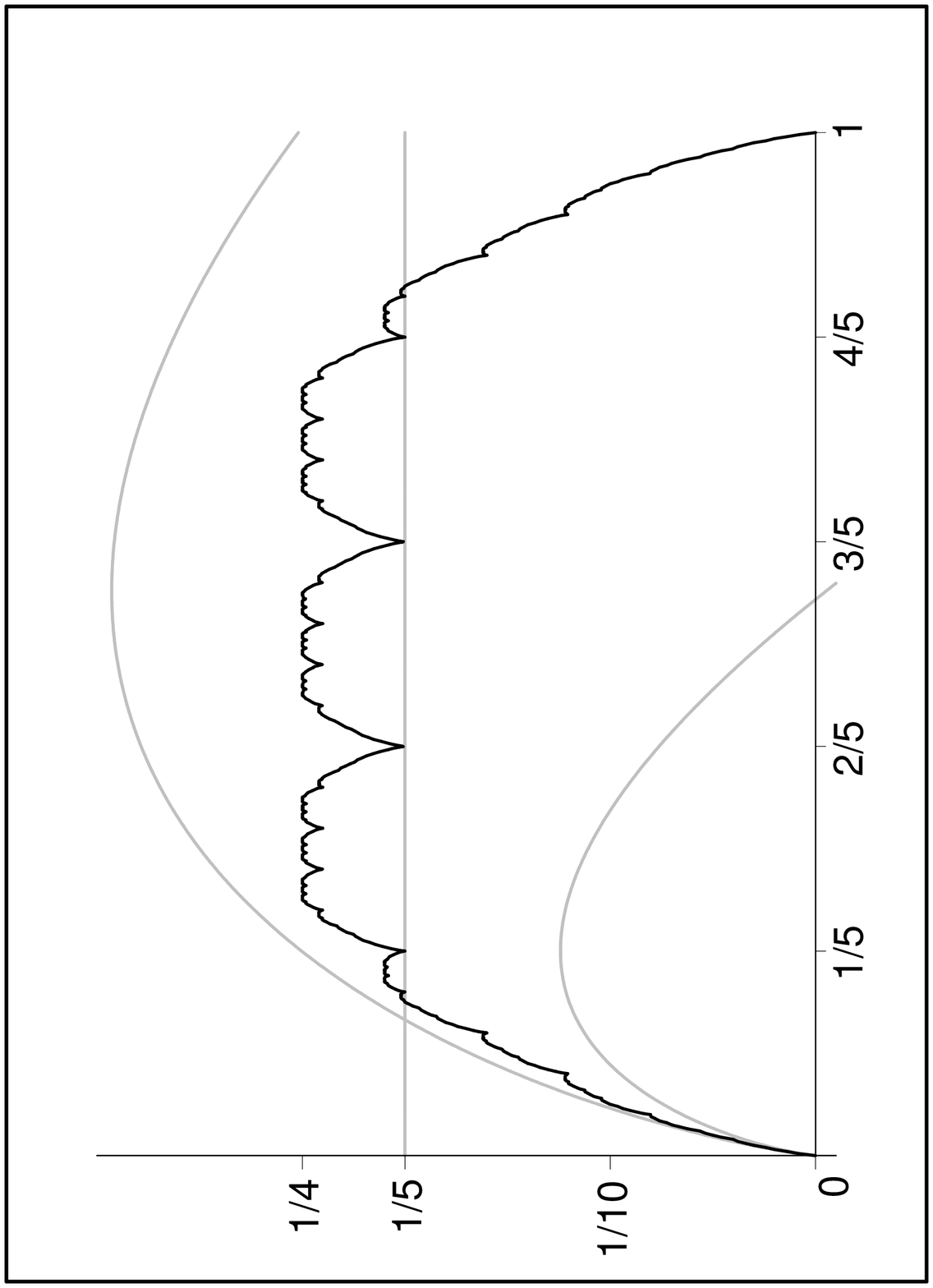}
\caption{The graphs of the functions $\ome_2$ (Takagi function), $\ome_3$,
  and $\ome_5$.}
\end{figure}

%\begin{figure}[h]
%\centering
%\includegraphics[height=0.30\textwidth,width=0.30\textwidth]{ome2.eps}
%\hskip .25cm
%\includegraphics[height=0.30\textwidth,width=0.30\textwidth]{ome3.eps}
%\hskip .25cm
%\includegraphics[height=0.30\textwidth,width=0.30\textwidth]{ome5.eps}
%\caption{The graphs of the functions $\ome_2$ (Takagi function), $\ome_3$,
%  and $\ome_5$.}
%\end{figure}

The reason for omitting the graph of $\ome_4$ is that this function turns out
to be identical, up to a constant factor, to the Takagi function; namely, we
have
\begin{equation}\label{e:omega4=omega2}
  \ome_4 = \frac12\,\ome_2.
\end{equation}
To prove this somewhat surprising relation, it suffices to show that for any
real $x$ and integer $k\ge 0$ we have
  $$ 2^{-2k}\|2^{2k}x\|+2^{-2k-1}\|2^{2k+1}x\|
             = 2\cdot 4^{-k} \|4^kx\|_{1/4}. $$
Indeed, letting $z:=2^{2k}x$ and multiplying by $2^{2k+1}$, we can rewrite
this equality as
  $$ 2\|z\|+\|2z\|=4\|z\|_{1/4}, $$
and this is readily verified by restricting $z$ to the range $0\le z\le 1/2$
and considering separately the cases $z\le 1/4$ and $z\ge 1/4$.

It was conjectured by P\'ales \refb{p} and proved by Boros \refb{bo} that the
Takagi function satisfies
\begin{equation}\label{e:bp}
  \ome_2\bfr{x_1+x_2}{}
              \le \frac{\ome_2(x_1)+\ome_2(x_2)}2 + \frac12\,|x_1-x_2|,
                                                          \quad x_1,x_2\in\R.
\end{equation}
Combining this inequality with a result of H\'azy and P\'ales
\cite[Theorem~4]{b:hp}, one immediately derives the following stronger
version:
\begin{multline}\label{e:gen-bp}
  \ome_2(\lam x_1+(1-\lam)x_2)
     \le \lam\ome_2(x_1)+(1-\lam)\ome_2(x_2) + \ome_2(\lam) |x_1-x_2|, \\
                                                 x_1,x_2\in\R,\ \lam\in[0,1].
\end{multline}
We give short proofs to \refe{bp} and \refe{gen-bp}, which seem to be
genuinely different from the original proofs, and establish the $3$-adic
analogs.

\begin{theorem}\label{t:bp3}
We have
  $$ \ome_3\(\frac{x_1+x_2+x_3}3\)
       \le \frac{\ome_3(x_1)+\ome_3(x_2)+\ome_3(x_3)}3 + \frac13\,(x_3-x_1) $$
for any real $x_1\le x_2\le x_3$.
\end{theorem}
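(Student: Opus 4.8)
The plan is to exploit the self-similarity of $\ome_3$. First I would record the functional equation
\[ \ome_3(x) = \|x\|_{1/3} + \tfrac13\,\ome_3(3x), \qquad x\in\R, \]
which follows at once by splitting off the $k=0$ term in \refe{omega-m-def} and re-indexing. Writing $M:=\frac{x_1+x_2+x_3}3$ for the mean and introducing the \emph{defect}
\[ D(x_1,x_2,x_3) := \ome_3(M) - \tfrac13\big(\ome_3(x_1)+\ome_3(x_2)+\ome_3(x_3)\big), \]
the assertion of the theorem is precisely $D(x_1,x_2,x_3)\le\frac13(x_3-x_1)$.

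Next I would feed the functional equation into $D$. Since $3M=x_1+x_2+x_3$ is simultaneously the mean of $3x_1,3x_2,3x_3$, the equation yields the exact one-scale recursion
\[ D(x_1,x_2,x_3) = \Phi(x_1,x_2,x_3) + \tfrac13\,D(3x_1,3x_2,3x_3), \]
where $\Phi(x_1,x_2,x_3):=\|M\|_{1/3}-\frac13\sum_{i=1}^3\|x_i\|_{1/3}$ is the contribution of the single term $k=0$. Iterating and using that $\ome_3$ is bounded by \refe{max-omega} (so the tail $3^{-N}D(3^Nx_1,3^Nx_2,3^Nx_3)$ dies), I obtain
\[ D(x_1,x_2,x_3) = \sum_{k=0}^\infty 3^{-k}\,\Phi(3^kx_1,3^kx_2,3^kx_3), \]
so everything reduces to estimating this series of single-scale defects against $\frac13(x_3-x_1)$.

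The decisive structural observation I would make is that the truncated sawtooth $\|\cdot\|_{1/3}$ is affine on each interval between consecutive points of $\frac13\mathbb{Z}$; hence $\Phi(3^kx_1,3^kx_2,3^kx_3)=0$ unless the interval $[3^kx_1,3^kx_3]$ contains a point of $\frac13\mathbb{Z}$. Because multiplication by $3$ maps $\frac13\mathbb{Z}$ into $\mathbb{Z}\seq\frac13\mathbb{Z}$, if $[3^kx_1,3^kx_3]$ meets $\frac13\mathbb{Z}$ then so does $[3^{k+1}x_1,3^{k+1}x_3]$; thus the scales carrying a nonzero defect form an up-set $\{k\ge k_1\}$. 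Since the first $k_1$ terms vanish, $D(x_1,x_2,x_3)=3^{-k_1}D(3^{k_1}x_1,3^{k_1}x_2,3^{k_1}x_3)$, while $x_3-x_1=3^{-k_1}(3^{k_1}x_3-3^{k_1}x_1)$, so the inequality for $(x_i)$ is equivalent to the one for $(3^{k_1}x_i)$; together with invariance under a common integer shift (which leaves both $D$ and $x_3-x_1$ fixed by $1$-periodicity of $\ome_3$), this lets me normalize to the bounded regime $0\le x_1\le x_2\le x_3\le 1$ with $[x_1,x_3]$ meeting $\frac13\mathbb{Z}$ but containing no interior integer.

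The main obstacle is this final bounded case, which is genuinely the heart of the matter: even after normalization the series retains infinitely many terms, and the crude per-scale estimates are insufficient. Indeed, the $1$-Lipschitz bound gives only $\Phi\le\frac23(x_3-x_1)$ and the cap gives only $\Phi\le\frac13$, and summing either over the scales overshoots the target $\frac13(x_3-x_1)$ by a logarithmic factor. What must be exploited is the anti-correlation between consecutive scales: a single-scale defect near its maximum $\frac13$ forces $M$ into the middle third $[\frac13,\frac23]\pmod 1$, which constrains the way $3x_1,3x_2,3x_3$ straddle the next breakpoint and suppresses the defect one scale up. I would therefore organize the bounded case according to the finitely many patterns in which $x_1,x_2,x_3$ straddle the breakpoint inside $[0,1]$, expand $\ome_3$ on the resulting subintervals via the functional equation to turn the series into a closing recursive inequality, and verify it directly; along the way one checks that equality holds exactly at the degenerate configurations $\{x_1,x_1,x_1+1\}$ and $\{x_1,x_1+1,x_1+1\}$, for which $M=x_1+\frac13$ or $x_1+\frac23$ and both sides equal $\frac13$.
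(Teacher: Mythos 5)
Your reduction framework is sound as far as it goes: the functional equation $\ome_3(x)=\|x\|_{1/3}+\frac13\,\ome_3(3x)$, the defect series $D=\sum_{k\ge0}3^{-k}\Phi(3^kx_1,3^kx_2,3^kx_3)$, the observation that $\Phi$ vanishes at scales where $[3^kx_1,3^kx_3]$ misses $\frac13\mathbb{Z}$, and the rescaling to the first active scale are all correct, and they parallel the paper's own opening moves (there, continuity reduces the claim to triadic points, and with $T_r(n):=3^r\ome_3(n/3^r)$ the statement becomes $T_r(x+y+z)\le T_{r-1}(x)+T_{r-1}(y)+T_{r-1}(z)+(z-x)$ for integers $x\le y\le z$ --- your series expansion is essentially the continuous form of this). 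But at exactly the point you yourself flag as ``genuinely the heart of the matter,'' the proposal stops being a proof: the ``closing recursive inequality'' is never stated, the ``finitely many patterns'' are never listed, and nothing is verified. This finish is not routine. In the paper it is a multi-page induction on $r$ split into five cases according to the residues of $x,y,z$ modulo $3$, driven by the re-centering identity $T_r(n)=\frac23\,T_r(n-\xi_n)+\frac13\,T_r(n-\zet_n)+\del_3(n)$ (Corollary \refc{takagi3}): case (i) requires averaging three distinct applications of the induction hypothesis with $+3$ corrections when $x=y$ or $y=z$, and case (ii) requires refinements depending on which of the three numbers is the one divisible by $3$. Your anti-correlation heuristic correctly explains why crude per-scale bounds overshoot, but no substitute for this delicate bookkeeping is supplied, so the gap sits precisely where all the difficulty is concentrated.

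Two concrete defects confirm that the final verification was never carried out. First, the normalization to $0\le x_1\le x_2\le x_3\le1$ is unjustified: rescaling to the first active scale $k_1$ bounds the spread only when $k_1\ge1$ (scale $k_1-1$ being inactive forces $3^{k_1-1}(x_3-x_1)<\frac13$, hence $3^{k_1}(x_3-x_1)<1$); if $[x_1,x_3]$ already meets $\frac13\mathbb{Z}$ at scale $0$, the spread is unconstrained, common integer shifts cannot shrink it, and the regime $1<x_3-x_1<\frac32$ is neither covered nor immediate --- the crude bound $\ome_3\le\frac12$ from \refe{max-omega} disposes only of $x_3-x_1\ge\frac32$. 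Second, your equality characterization is false in both directions: the triple $(0,\frac12,1)$ gives $\ome_3(\frac12)=\frac12=\frac13\cdot\frac12+\frac13$, an equality case not of your form, while $(\frac16,\frac76,\frac76)$, which is of your form $\{x_1,x_1+1,x_1+1\}$, gives $\ome_3(\frac56)=\frac13<\frac23$, a strict inequality (your configurations are extremal only for special $x_1$, e.g.\ integer $x_1$). To convert the proposal into a proof you would need to state and verify the bounded-window inequality over all straddling patterns --- a task essentially equivalent in substance to the paper's five-case analysis --- and additionally patch the large-spread reduction.
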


\begin{theorem}\label{t:gen-bp3}
We have
  $$ \ome_3(\lam x_1+(1-\lam)x_2)
       \le \lam\ome_3(x_1)+(1-\lam)\ome_3(x_2) + \ome_3(\lam)\,|x_2-x_1| $$
for any real $x_1,x_2$, and $\lam\in[0,1]$.
\end{theorem}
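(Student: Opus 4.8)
The plan is to deduce Theorem~\reft{gen-bp3} from the ternary Boros--P\'ales inequality of Theorem~\reft{bp3} together with the extremal characterization of $\ome_3$ (the ternary analog, promised in the abstract, of the maximality property of the Takagi function), in exact parallel with the way the binary inequality \refe{gen-bp} is obtained from \refe{bp}. The underlying idea is that, for fixed endpoints, the suitably normalized ``defect'' between the two sides of the desired inequality, regarded as a function of the weight $\lam$, itself satisfies the hypotheses of that characterization; since $\ome_3$ is the pointwise largest such function, the defect is dominated by $\ome_3(\lam)$, which is precisely the claim.

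In detail, I would first dispose of the trivial case $x_1=x_2$ and reduce to $x_1<x_2$, noting that the asserted inequality is invariant under the substitution $(x_1,x_2,\lam)\mapsto(x_2,x_1,1-\lam)$, since $\ome_3$ is even and $1$-periodic, so $\ome_3(1-\lam)=\ome_3(\lam)$. Assuming $x_1<x_2$, I set
  $$ g(\lam) := \frac{\ome_3(\lam x_1+(1-\lam)x_2)-\lam\ome_3(x_1)-(1-\lam)\ome_3(x_2)}{x_2-x_1}, \quad \lam\in[0,1]. $$
Then $g$ is continuous and $g(0)=g(1)=0$, so the boundary condition $\max\{g(0),g(1)\}\le 0$ holds. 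The crux is to verify that $g$ satisfies the same inequality as in Theorem~\reft{bp3}. Writing $\phi(\lam):=\lam x_1+(1-\lam)x_2$ (an affine, decreasing map with $\phi(0)=x_2$ and $\phi(1)=x_1$) and $y_i:=\phi(\lam_i)$, for any $\lam_1\le\lam_2\le\lam_3$ in $[0,1]$ one has $y_1\ge y_2\ge y_3$ and $y_1-y_3=(\lam_3-\lam_1)(x_2-x_1)$. Applying Theorem~\reft{bp3} to the increasing triple $y_3\le y_2\le y_1$ and using that both $\phi$ and $\lam\mapsto\lam\ome_3(x_1)+(1-\lam)\ome_3(x_2)$ are affine (so centroids commute with them and the linear parts cancel after subtraction), a short computation yields
  $$ g\(\frac{\lam_1+\lam_2+\lam_3}3\)\le\frac{g(\lam_1)+g(\lam_2)+g(\lam_3)}3+\frac13(\lam_3-\lam_1), $$
which is exactly the ternary inequality of Theorem~\reft{bp3} for $g$.

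Having both the boundary condition and the ternary inequality for $g$, I would invoke the extremal characterization of $\ome_3$ to conclude that $g(\lam)\le\ome_3(\lam)$ for all $\lam\in[0,1]$; multiplying through by $x_2-x_1=|x_2-x_1|$ then gives the theorem. If one prefers to keep the argument self-contained, the same conclusion $g\le\ome_3$ follows by iterating the ternary inequality down to the triadic rationals in $[0,1]$ and passing to the limit by continuity, which is precisely the content of that characterization. I expect the only genuine obstacle to be bookkeeping rather than analysis: one must carefully track the order reversal caused by $\phi$ being decreasing, so that the error term $\frac13(x_3-x_1)$ of Theorem~\reft{bp3} becomes $\frac13(y_1-y_3)=\frac13(\lam_3-\lam_1)(x_2-x_1)$ with the correct sign, and one must confirm that the extremal characterization is stated for exactly the class of continuous functions on $[0,1]$ to which $g$ belongs. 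No analytic input beyond Theorem~\reft{bp3} and this characterization is required.
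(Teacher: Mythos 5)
Your proposal is correct and follows essentially the paper's own route: the paper obtains Theorem~\reft{gen-bp3} as the case $m=3$, $f=3\ome_3$ of Theorem~\reft{funny}, whose proof introduces exactly your normalized defect function (there called $f_{x,y}$, equal to $3g$), verifies it lies in $\cF_3$ by the same order-reversal computation you give (Theorem~\reft{bp3} applied to the decreasing images $y_i$, with the affine parts cancelling), and then bounds it by the maximal function $F_3$, which equals $3\ome_3$ by Theorem~\reft{Fm} --- precisely your ``extremal characterization'' step, up to the harmless factor-of-$3$ normalization between the constant $\tfrac13$ in Theorem~\reft{bp3} and the constant $1$ defining $\cF_3$. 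The self-contained alternative you sketch, iterating the ternary inequality over triadic rationals and passing to the limit by continuity, is likewise exactly how the paper proves the needed half $F_3\le 3\ome_3$ (Lemma~\refl{takagi} plus induction on $r$).
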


The importance of Boros-P\'ales inequality~\refe{bp} and Theorem~\reft{bp3}
for our present purposes, and the way they are applied in this paper, will be
explained shortly. Inequality \refe{gen-bp} and Theorem~\reft{gen-bp3} are
derived as particular cases of a more general result, presented below
(Theorem~\reft{funny}).

Back to Theorem~\reft{main1}, we actually prove a more versatile and precise
result, with the improvement being particularly significant for small values
of $m$. To state it we bring into consideration the classes of functions,
defined as follows. For integer $m\ge 2$, let $\cF_m$ consist of all
real-valued functions $f$, defined on the interval $[0,1]$, satisfying the
boundary condition
\begin{equation}\label{e:bound-cond}
  \max\{f(0),f(1)\}\le 0,
\end{equation}
and such that for any $x_1\longc x_m\in[0,1]$ with $\min_i x_i=x_1$ and
 $\max_i x_i=x_m$, we have
\begin{equation}\label{e:m}
  f\( \frac{x_1\longp x_m}m \) \le \frac{f(x_1)\longp f(x_m)}{m} + (x_m-x_1).
\end{equation}
Condition \refe{m} can be understood as a ``relaxed convexity'' and in fact,
any convex function satisfying the boundary condition \refe{bound-cond} is
contained in every class $\cF_m$.

We notice that if $l,m\ge 2$ are integers with $l\mid m$, then
$\cF_m\seq\cF_l$: for, given a function $f\in\cF_m$ and a system of $l$
numbers in $[0,1]$, we can ``blow up'' this system to get a system of $m$
numbers (where every original number is repeated $m/l$ times) and apply then
\refe{m} to this new system to obtain the analogue of \refe{m} for the
original $l$ numbers. For $l=2$ and $m=4$ the inverse inclusion holds, too,
so that we have $\cF_4=\cF_2$; to prove this, fix $f\in\cF_2$ and
$x_1,x_2,x_3,x_4\in[0,1]$ with $x_1\le x_2\le x_3\le x_4$, and observe that
then
\begin{align*}
  f\(\frac{x_1+x_2+x_3+x_4}4\)
     &\le \frac12\, \( f\(\frac{x_1+x_2}2\) +  f\(\frac{x_3+x_4}2\) \) \\
     &\hskip 2.0in         + \frac{x_3+x_4}2 - \frac{x_1+x_2}2 \\
     &\le \frac14\,\big( f(x_1)+f(x_2)+f(x_3)+f(x_4) \big) + (x_4-x_1),
\end{align*}
whence $f\in\cF_4$.

It is not difficult to see, however, that, say, the class $\cF_6$ is distinct
from each of the classes $\cF_2$ and $\cF_3$, and the class $\cF_8$ is
distinct from the class $\cF_2$. Indeed, a straightforward numerical
verification confirms that the functions $F_2\in\cF_2$ and $F_3\in\cF_3$,
introduced below in this section, satisfy $F_2\notin\cF_6,\ F_3\notin\cF_6$,
and $F_2\notin\cF_8$.

We notice that Boros-P\'ales inequality \refe{bp} can be interpreted as
$2\ome_2\in\cF_2$, and Theorem~\reft{bp3} gives $3\ome_3\in\cF_3$. (In fact,
it is the \emph{restrictions} of the functions $2\ome_2$ and $3\ome_3$ onto
the interval $[0,1]$ that belong to the classes $\cF_2$ and $\cF_3$,
respectively. However, this little abuse of notation does not lead to any
confusion.)

\begin{theorem}\label{t:isoper}
Let $m\ge 2$ be an integer. Suppose that $f\in\cF_m$, and $G$ is a finite
abelian group, the exponent of which divides $m$. Then for any subset $A\seq
G$ and any generating subset $S\seq G$ we have
  $$ \prt_S(A) \ge \frac 1m\,|G|\, f(|A|/|G|). $$
\end{theorem}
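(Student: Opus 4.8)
The plan is to argue by induction on $|G|$, exploiting the additivity of the boundary over the generators together with the defining inequality \refe{m} of the class $\cF_m$. The starting point is the identity $\prt_S(A)=\sum_{s\in S}\prt_{\{s\}}(A)$, which holds because every edge is counted by exactly one generator; since all summands are non-negative, it is enough to prove the bound for a \emph{minimal} generating subset $S'\seq S$. I will fix such an $S'$, single out one of its elements $s$, and set $H:=\langle S'\stm\{s\}\rangle$. Minimality forces $H$ to be a proper subgroup, so that the quotient $G/H$ is cyclic, generated by the image of $s$, of some order $d$ with $2\le d\mid m$. The $d$ cosets $F_0\longc F_{d-1}$ of $H$, ordered so that $F_j+s=F_{j+1}$ (indices mod $d$), are the fibers over which I will work; writing $\alp_j:=|A\cap F_j|/|H|$ for the density of $A$ in $F_j$, one has $\bar\alp:=(\alp_0\longp\alp_{d-1})/d=|A|/|G|$.

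Next I split the boundary according to whether a generator moves a point inside its fiber or across fibers. The generators in $S'\stm\{s\}$ lie in $H$ and therefore act within each fiber; identifying each $F_j$ with $H$ by translation and applying the induction hypothesis to $H$ (whose exponent still divides $m$) with the generating set $S'\stm\{s\}$ gives $\sum_{s'\ne s}\prt_{\{s'\}}(A)\ge\frac1m|H|\sum_j f(\alp_j)$. The remaining generator $s$ sends $F_j$ bijectively onto $F_{j+1}$, so the number of its boundary edges leaving $F_j$ is at least $|A\cap F_j|-|A\cap F_{j+1}|$ (and of course at least $0$); summing over $j$, $\prt_{\{s\}}(A)\ge|H|\sum_j(\alp_j-\alp_{j+1})^+$, the total cyclic decrease of the density profile.

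It remains to combine the two estimates, and this is where the hypothesis $f\in\cF_m$ enters. After dividing by $|H|/m$, the desired inequality reduces to $\sum_j f(\alp_j)+m\sum_j(\alp_j-\alp_{j+1})^+\ge d\,f(\bar\alp)$. Since $d\mid m$ we have $f\in\cF_d$, and \refe{m} applied to the $d$ numbers $\alp_0\longc\alp_{d-1}$ yields $d\,f(\bar\alp)\le\sum_j f(\alp_j)+d\,(\max_j\alp_j-\min_j\alp_j)$, so it suffices to check that $d(\max_j\alp_j-\min_j\alp_j)\le m\sum_j(\alp_j-\alp_{j+1})^+$. This is the crux of the argument, and I expect it to be its only genuinely delicate point: one observes that the total decrease of a cyclic sequence is at least its range, i.e. $\sum_j(\alp_j-\alp_{j+1})^+\ge\max_j\alp_j-\min_j\alp_j$ (follow the arc from a maximizing to a minimizing index, along which the accumulated decrease is at least the net drop), after which the trivial bound $d\le m$ finishes the job. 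Matching the single ``range'' term $(x_m-x_1)$ in the definition of $\cF_m$ against the cyclic total variation produced by the cross-fiber edges — and noticing that the inclusion $\cF_m\seq\cF_d$ lets me apply the relaxed-convexity inequality to exactly $d$ points, one per fiber — is the conceptual heart of the proof. The base case $|G|=1$ is immediate from the boundary condition \refe{bound-cond}.
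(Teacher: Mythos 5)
Your proof is correct and takes essentially the same route as the paper's: induction on $|G|$ after passing to a minimal generating set, the fiberwise application of the induction hypothesis over the cosets of $H=\langle S'\stm\{s\}\rangle$, the cross-fiber count $\sum_j(\alp_j-\alp_{j+1})^+$ (identical to the paper's $\frac12\sum_j|x_j-x_{j+1}|$ bounded below by the range via Lemma~\refl{absum}), and finally the relaxed convexity \refe{m} applied in $\cF_d$ through the inclusion $\cF_m\seq\cF_d$ together with $d\le m$. The only cosmetic difference is that you absorb the case $S'=\{s\}$ into the induction via the trivial subgroup and the base case $|G|=1$, whereas the paper disposes of the cyclic case $S_0=\est$ separately using \refe{f(j/m)}.
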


Theorem~\reft{main1} is an immediate consequence of Theorem~\reft{isoper} and
\begin{proposition}\label{p:exln1x}
Let $f(x)=ex\ln(1/x)$ for $x\in(0,1]$, and $f(0)=0$. Then $f\in\cF_m$ for any
integer $m\ge 2$.
\end{proposition}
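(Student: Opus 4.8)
The plan is to verify the two defining properties of $\cF_m$ separately. The boundary condition \refe{bound-cond} is immediate since $f(0)=f(1)=0$, so only the relaxed-convexity condition \refe{m} requires work. Writing $f=-e\phi$ with $\phi(x):=x\ln x$ (and $\phi(0):=0$), which is convex on $[0,1]$, and denoting by $\bar x$ the mean of $x_1\longc x_m$, I would first observe that \refe{m} is equivalent to the statement that the Jensen gap of $\phi$ is controlled by the range of the sample:
$$ \frac1m\sum_{i=1}^m\phi(x_i)-\phi(\bar x)\le\frac1e\,(x_m-x_1). $$

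The first reduction is to replace $\phi$ by the chord $M$ of its graph over $[x_1,x_m]$. Since $\phi$ is convex and every $x_i$ lies in $[x_1,x_m]$, we have $\phi(x_i)\le M(x_i)$; as $M$ is affine its own Jensen gap vanishes, whence $\frac1m\sum\phi(x_i)-\phi(\bar x)\le M(\bar x)-\phi(\bar x)\le\max_{[x_1,x_m]}(M-\phi)$. This collapses the $m$-point inequality to a two-endpoint statement: setting $a:=x_1$ and $b:=x_m$, it suffices to prove $\max_{[a,b]}(M-\phi)\le\frac1e(b-a)$. The second reduction is a scaling normalization. The gap between a function and a chord is unchanged under adding an affine function to $\phi$, and $\phi(\lambda x)=\lambda\phi(x)+(\lambda\ln\lambda)x$, so both sides of the inequality scale by $\lambda$ under $x\mapsto\lambda x$; hence I may assume $b=1$ and write $t:=a\in[0,1)$.

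With $b=1$ the function $M-\phi$ is concave and vanishes at both endpoints, so its maximum is attained at the interior point $x^\ast$ where $\phi'(x^\ast)$ equals the chord slope $\sigma:=-t\ln t/(1-t)$. A short computation gives $x^\ast=e^{\sigma-1}$ and $\max_{[t,1]}(M-\phi)=e^{\sigma-1}-\sigma$, so everything reduces to the scalar inequality $e^\sigma-e\sigma\le 1-t$. To finish I would squeeze $\sigma$ between $t$ and $1$: from $\ln t\le t-1$ one gets $\sigma=\frac{t(-\ln t)}{1-t}\ge t$, and from $1-t+t\ln t\ge0$ one gets $\sigma\le1$. On $[0,1]$ the convexity of the exponential gives $e^\sigma\le1+(e-1)\sigma$ (the chord of $e^x$), i.e. $e^\sigma-e\sigma\le1-\sigma$, and combining with $\sigma\ge t$ yields $e^\sigma-e\sigma\le1-\sigma\le1-t$, as required.

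I expect the main obstacle to be the choice of reductions rather than any single computation: the chord bound and the scaling normalization are precisely what turn the $m$-variable inequality \refe{m} into a one-variable inequality, after which the estimate is elementary. One must also handle the degenerate boundary cases $a=0$ and $a=b$ separately, where the chord argument breaks down but the inequality holds trivially or by continuity of $\phi$ at $0$. It is worth noting that equality is approached as $t\to0$ and as $t\to1$, which is consistent with the sharpness of the coefficient $e$ recorded in the Example.
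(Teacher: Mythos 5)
Your proof is correct, and it overlaps with the paper only in its first half. Your chord/Jensen-gap reduction is in essence the paper's Lemma~\refl{concave-in-Fm}, which likewise uses concavity to collapse the $m$-point condition \refe{m} to the two-point inequality \refe{twostars1}; indeed, for a concave $f=-e\phi$, the validity of \refe{twostars1} for all $\lam\in[0,1]$ is precisely your two-endpoint statement $\max_{[x_1,x_2]}(M-\phi)\le\frac1e\,(x_2-x_1)$. Where you genuinely diverge is in proving that two-point inequality. The paper fixes $\lam$, shows the difference $\Del_\lam(x_1,x_2)$ of the two sides of \refe{twostars1} is convex in $x_2$ (reducing to $x_2\in\{x_1,1\}$), then convex in $x_1$, and finishes by the endpoint checks $\Del_\lam(0,1)=f(1-\lam)-1\le 0$ and $\Del_\lam(1,1)=0$, using $\max f=1$. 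You instead exploit the self-similarity $\phi(\lam x)=\lam\phi(x)+(\lam\ln\lam)x$ to normalize $b=1$, compute the chord gap in closed form as $e^{\sigma-1}-\sigma$ with $\sigma=-t\ln t/(1-t)$, and conclude via the scalar chain $e^\sigma-e\sigma\le 1-\sigma\le 1-t$, which rests on the correct bracketing $t\le\sigma\le 1$. What each route buys: the paper's argument avoids solving for the critical point and needs only sign checks of second derivatives, but it is a two-stage, two-variable reduction; yours pins down the \emph{exact} maximum of the Jensen gap, makes the near-equality regimes $t\to0$ and $t\to1$ visible (consistent with the sharpness of the coefficient $e$ in the Example), and ends in a one-variable elementary estimate. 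Two small points for completeness: your degenerate cases really are trivial ($a=b$ forces all $x_i$ equal, and $b=0$ forces all $x_i=0$), and one should record that the critical point $x^\ast=e^{\sigma-1}$ lies in $[t,1]$ (this follows from $t\le\sigma\le1$ together with $e^{t-1}\ge t$); alternatively, if it fell outside, the concave function $M-\phi$ would attain its maximum $0$ at an endpoint and the bound would hold trivially.
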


To use Theorem~\reft{isoper} more efficiently, we must choose the function
$f$ in an optimal way for every particular value of $m$. Our next result
shows that for each $m\ge 2$, there is a ``universal'' choice which does not
depend on the density $|A|/|G|$.

\begin{theorem}\label{t:Fmp}
For any $m\ge 2$ there is a (unique) function $F_m\in\cF_m$ such that for any
other function $f\in\cF_m$ and any $x\in[0,1]$ we have $F_m(x)\ge f(x)$. The
function $F_m$ is continuous on $[0,1]$, strictly positive on $(0,1)$, and
satisfies $F_m(0)=F_m(1)=0$ and $F_m(x)=F_m(1-x)$ for $x\in[0,1]$.
\end{theorem}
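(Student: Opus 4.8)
The plan is to realize $F_m$ as the pointwise supremum of the whole class and then to check that this supremum is again a member, so that it is automatically the largest one. Concretely, set
\[
  F_m(x) := \sup\{f(x)\colon f\in\cF_m\},\qquad x\in[0,1].
\]
The decisive formal feature of \refe{m} is that it is \emph{monotone} in the values of $f$: for fixed $x_1\longc x_m$ (with $x_1=\min_i x_i$, $x_m=\max_i x_i$) every $f\in\cF_m$ satisfies $f(\frac{x_1\longp x_m}m)\le\frac1m(f(x_1)\longp f(x_m))+(x_m-x_1)\le\frac1m(F_m(x_1)\longp F_m(x_m))+(x_m-x_1)$, and the last expression no longer depends on $f$; taking the supremum of the left-hand side over $f$ therefore shows that $F_m$ itself obeys \refe{m}. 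Since likewise $F_m(0)=\sup_f f(0)\le0$ and $F_m(1)\le0$, we get $F_m\in\cF_m$, and by construction $F_m\ge f$ for every $f\in\cF_m$; uniqueness of a pointwise-largest element is then immediate.

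Next the elementary structural properties. The constant $0$ lies in $\cF_m$, so $F_m\ge0$; combined with $F_m(0),F_m(1)\le0$ this forces $F_m(0)=F_m(1)=0$. The reflection $x\mapsto1-x$ carries $\cF_m$ onto itself: for $\tilde f(x):=f(1-x)$ the boundary condition \refe{bound-cond} is preserved, and replacing each $x_i$ by $1-x_i$ (which interchanges the roles of min and max, complements the average, and leaves the spread unchanged) preserves \refe{m}; hence $F_m(1-x)=F_m(x)$. For strict positivity on $(0,1)$ it suffices to exhibit at each interior point one member of $\cF_m$ positive there: the tent function $\tau(x):=\min\{x,1-x\}$ works, since any $1$-Lipschitz $g$ with $g(0),g(1)\le0$ lies in $\cF_m$ (average over $i$ the inequalities $g(\bar x)-g(x_i)\le|\bar x-x_i|\le x_m-x_1$), and $\tau>0$ on $(0,1)$; thus $F_m\ge\tau>0$ there.

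The two genuinely analytic points are finiteness of the supremum and continuity. For finiteness I would first record a self-referential bound from \refe{m}: writing $x\in[0,1)$ as $x=\frac{d+x'}m$ with $d=\lfloor mx\rfloor$ and $x'=\{mx\}$, and applying \refe{m} to the $m$-tuple consisting of $d$ copies of $1$, $m-1-d$ copies of $0$, and one copy of $x'$ (average $x$, spread $\le1$), the boundary condition \refe{bound-cond} yields $f(x)\le1+\frac1m f(x')$; iterating gives $f(x)\le\frac m{m-1}+m^{-k}f(\{m^kx\})$, so $f\le\frac m{m-1}$ as soon as $f$ is known to be bounded, whence $F_m\le\frac m{m-1}<\infty$. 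I expect this boundedness input to be the crux: inequality \refe{m} alone does not force it, since any additive (Cauchy) function vanishing at $1$ satisfies \refe{m} and \refe{bound-cond} yet is unbounded, so one must either restrict at the outset to the measurable members of $\cF_m$—harmless, as the extremal function will turn out continuous—or invoke the regularity theory of approximately convex functions.

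Continuity is the main obstacle, and I would derive it from the same circle of ideas once local boundedness is secured: \refe{m} is a relaxed convexity whose error term $(x_m-x_1)$ vanishes as the points coalesce, and a locally bounded (measurable) function satisfying such an estimate is continuous. This is the hard analytic kernel, because $F_m$ is genuinely \emph{non}-convex—it will coincide with a Takagi-type function $\ome_m$—so one cannot reduce to classical convexity and must either produce a modulus of continuity directly or appeal to a H\'azy--P\'ales type theorem. Continuity at the endpoints is then automatic from the squeeze $0\le\tau\le F_m\le\frac m{m-1}$ together with $F_m(0)=F_m(1)=0$.
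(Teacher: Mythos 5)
Your construction coincides with the paper's: it too sets $F_m(x):=\sup\{f(x)\colon f\in\cF_m\}$, verifies \refe{m} for the supremum exactly as you do, obtains the symmetry $F_m(x)=F_m(1-x)$ from the reflection $x\mapsto 1-x$, and gets strict positivity from one explicit member (the paper uses $ex\ln(1/x)$ via Proposition~\refp{exln1x}; your $1$-Lipschitz tent function is a correct and even more elementary substitute). But precisely the two steps you leave open are where the paper does its work, and one of your shortcuts is wrong. For continuity on $(0,1)$ the paper proves (Lemma~\refl{continuity}) that \emph{every} member of $\cF_m$ is continuous there, by a five-line argument you could have found: with $l:=\min\{\liminf_{x\to x_0}f(x),f(x_0)\}$ and $L:=\max\{\limsup_{x\to x_0}f(x),f(x_0)\}$, pick $\xi_k\to x_0$ with $f(\xi_k)\to l$ and $\zeta_k\to x_0$ with $f(\zeta_k)\to L$, and apply \refe{m} to $m-1$ copies of $\xi_k$ and one copy of $m\zeta_k-(m-1)\xi_k$ (whose spread is $o(1)$) to get $L\le((m-1)l+L)/m$, i.e.\ $L\le l$ --- no H\'azy--P\'ales machinery needed. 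Boundedness (Corollary~\refc{mm-1}, $\sup f\le m/(m-1)$) then comes from an induction at $m$-adic rationals plus this continuity. Your endpoint argument, by contrast, is genuinely broken: the squeeze $0\le\tau\le F_m\le \frac m{m-1}$ gives no information as $x\to 0+$, since the upper bound does not tend to $0$. The paper instead shows the one-sided limits exist, are equal by symmetry to some $L\ge 0$, and kills $L$ by applying \refe{m} with $m-1$ copies of $0$: $F_m(\xi_k)\le\frac{(m-1)F_m(0)+F_m(m\xi_k)}m+o(1)$ yields $L\le L/m$, hence $L=0$.

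That said, your Cauchy-function objection is not a scruple to be waved away --- it is a correct counterexample, and it hits the paper as well as you. A discontinuous additive $g$ with $g(1)=0$, restricted to $[0,1]$, satisfies \refe{bound-cond} and satisfies \refe{m} with equality (additive functions are $\mathbb Q$-homogeneous), yet is unbounded on every subinterval; moreover, for any fixed irrational $x_0$ one can prescribe $g(x_0)$ arbitrarily, so for the class as literally defined $\sup_{f\in\cF_m}f(x_0)=+\infty$ at every irrational point. This refutes Lemma~\refl{continuity} and Corollary~\refc{mm-1} as stated: the paper's liminf/limsup argument tacitly assumes $l$ and $L$ finite, which is exactly where your example lives. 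The repair is to build a regularity hypothesis into $\cF_m$ --- boundedness above is the cheapest (then $L<\infty$ at every interior point, the averaging inequality first forces $l>-\infty$ and then $L\le l$, and your orbit bound $f(x)\le\frac m{m-1}+m^{-k}f(\{m^kx\})$ finishes finiteness); your proposal of measurability also works but needs an extra Steinhaus-type step to reach local boundedness first, and none of this harms the paper's applications, where the relevant $f$ are continuous. So the verdict: your skeleton is the paper's skeleton and your counterexample is a real contribution, but as a proof the proposal is incomplete --- interior continuity is deferred to an unproved black box although a short direct argument exists, the endpoint continuity argument is invalid, and you stop short of noticing that your own example shows the theorem needs a restricted class (rather than merely a cleverer proof) to be true at all.
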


From now on we adopt $F_m$ as a standard notation for the functions of
Theorem~\reft{Fmp}.

We were able to find the functions $F_m$ explicitly for $m\in\{2,3,4\}$,
and estimate them for $m\ge 5$. Determining $F_m$ for $m\ge 5$ seems to
be a non-trivial and challenging problem; we have done some work towards
the case $m=5$, and the results may appear elsewhere.
\begin{theorem}\label{t:Fm}
For any $m\ge 2$ we have $F_m\le m\ome_m$, with equality if and only if
$m\in\{2,3,4\}$.
\end{theorem}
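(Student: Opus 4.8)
The plan is to establish the inequality $F_m\le m\ome_m$ for every $m\ge2$ first, and then to decide when it is an equality.

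\smallskip
\emph{The upper bound.} I would start from the functional equation
\[
  \ome_m(x)=\|x\|_{1/m}+\frac1m\,\ome_m(mx),
\]
which is immediate from \refe{omega-m-def} upon splitting off the $k=0$ summand and reindexing; multiplied by $m$ it reads $m\ome_m(x)=m\|x\|_{1/m}+\frac1m\,(m\ome_m)(mx)$. The core step is the recursive estimate
\[
  f(x)\le m\|x\|_{1/m}+\frac1m\,f(\{mx\}),\qquad x\in[0,1],
\]
valid for every $f\in\cF_m$ (here $\{y\}$ denotes the fractional part of $y$). To prove it I fix $x$, set $d:=\lfl mx\rfl\in\{0\longc m-1\}$ and $t:=mx-d=\{mx\}\in[0,1]$, and apply \refe{m} to the system consisting of $m-1-d$ copies of $0$, one copy of $t$, and $d$ copies of $1$. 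The mean of this system is exactly $x$; its spread $x_m-x_1$ equals $t$, $1$, or $1-t$ according as $d=0$, $1\le d\le m-2$, or $d=m-1$, and in each of these three cases this coincides with $m\|x\|_{1/m}$; finally the boundary condition \refe{bound-cond} lets me drop the $f(0)$ and $f(1)$ contributions.

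\smallskip
\emph{Passing to the limit.} Applying the recursive estimate to $f=F_m$, which is continuous, hence bounded, by Theorem~\reft{Fmp}, and iterating it while using the $1$-periodicity of $\|\cdot\|$, I obtain
\[
  F_m(x)\le\sum_{k=0}^{n-1}m^{1-k}\|m^kx\|_{1/m}+m^{-n}F_m(\{m^nx\})
\]
for every $n$. The tail term tends to $0$ as $n\to\infty$, leaving $F_m(x)\le m\sum_{k=0}^\infty m^{-k}\|m^kx\|_{1/m}=m\ome_m(x)$, which is the asserted bound for all $m\ge2$.

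\smallskip
\emph{The equality cases.} For $m\in\{2,3,4\}$ the function $m\ome_m$ lies in $\cF_m$: this is the Boros--P\'ales inequality \refe{bp} for $m=2$, Theorem~\reft{bp3} for $m=3$, and, for $m=4$, the identity $4\ome_4=2\ome_2$ from \refe{omega4=omega2} combined with the equality $\cF_4=\cF_2$ established above. Maximality of $F_m$ (Theorem~\reft{Fmp}) then gives $m\ome_m\le F_m$, and together with the upper bound this yields $F_m=m\ome_m$. For $m\ge5$ I would instead show $m\ome_m\notin\cF_m$ by exhibiting a single system violating \refe{m}: take $m-2$ copies of $0$ and two copies of $2/m^2$. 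Its mean is $4/m^3$, and a direct computation from the functional equation gives $m\ome_m(4/m^3)=9/m^2$, whereas the right-hand side of \refe{m} equals $2\ome_m(2/m^2)+2/m^2=8/m^2$. Hence \refe{m} fails. Since $F_m\in\cF_m$ while $m\ome_m\notin\cF_m$, the two functions differ, and as $F_m\le m\ome_m$ the inequality is strict somewhere.

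\smallskip
The routine part is the upper bound, where the three-regime choice of the test system is forced by matching $m\|x\|_{1/m}$. The delicate part, and the main obstacle, is the strictness for $m\ge5$: the most natural systems (for instance $m-1$ copies of $0$ and a single interior point, or two equal interior points placed one level down) produce \emph{exact equality} in \refe{m} for every $m\ge2$, and the case $m=5$ is genuinely borderline. A violating system must therefore reach two levels into the base-$m$ self-similar structure of $\ome_m$, which is precisely what the choice $2/m^2$ above accomplishes.
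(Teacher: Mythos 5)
Your proposal is correct, and both of its delicate steps take routes genuinely different from the paper's. For the upper bound, the paper proves $F_m(n/m^r)\le m\ome_m(n/m^r)$ by induction on $r$: it applies \refe{m} to the system of $m-\rho$ copies of $t/m^{r-1}$ and $\rho$ copies of $(t+1)/m^{r-1}$ (spread $1/m^{r-1}$), telescopes via Lemma~\refl{takagi}, and finishes by continuity of $F_m$ and $\ome_m$ at general $x$. You instead prove the one-step self-similarity inequality $f(x)\le m\|x\|_{1/m}+\frac1m f(\{mx\})$ for every $f\in\cF_m$ and iterate it directly at an arbitrary real $x$, needing only boundedness of $F_m$ (Corollary~\refc{mm-1}, or continuity from Theorem~\reft{Fmp}); this bypasses Lemma~\refl{takagi} and the density-plus-continuity step entirely. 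Your three-case matching of the spread with $m\|x\|_{1/m}$ is correct; the only stray point is $x=1$, where $\lfl mx\rfl=m$ falls outside $\{0\longc m-1\}$, but there $F_m(1)=0=m\ome_m(1)$, so nothing is lost. The equality cases $m\in\{2,3,4\}$ are handled exactly as in the paper. For strictness when $m\ge5$, the paper compares the two functions at a single point: $m\ome_m(4/m^2)=5/m$, while \refe{m} with $m-2$ zeros and two copies of $2/m$, together with \refe{f(j/m)}, gives $F_m(4/m^2)\le 4/m$. You instead exhibit non-membership $m\ome_m\notin\cF_m$ one level deeper ($m-2$ zeros and two copies of $2/m^2$), and your arithmetic checks out: $\ome_m(4/m^3)=9/m^3$ and $\ome_m(2/m^2)=3/m^2$ for $m\ge5$, so $9/m^2>8/m^2$ violates \refe{m}, and maximality of $F_m$ then forces $F_m\ne m\ome_m$. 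One small correction to your closing commentary: a violating system need \emph{not} reach two levels into the self-similar structure. The paper's own configuration, with the two interior points at $2/m$ (so the mean $4/m^2$ sits only one level below the points), already violates \refe{m} for $f=m\ome_m$, since $m\ome_m(4/m^2)=5/m$ exceeds $2\ome_m(2/m)+2/m=4/m$; what fails for $m\ge5$ but not for $m\le4$ is the truncation identity $\|4/m\|_{1/m}=1/m$, which is where the extra $1/m$ comes from in both your computation and the paper's. In exchange, your variant proves the slightly sharper structural fact $m\ome_m\notin\cF_m$, whereas the paper's produces an explicit point of strict inequality; both are sound.
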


The case $m\in\{2,3,4\}$ of Theorem~\reft{Fm} will be derived from \refe{bp},
Theorem~\reft{bp3}, and \refe{omega4=omega2}.

As remarked above, \refe{min-prt} follows from the results of the present
paper; indeed, the reader can now see that it is an immediate corollary of
Theorems~\reft{lex}, \reft{isoper}, and \reft{Fm}.

Combining Theorems \reft{isoper} and \reft{Fm} and estimates
\refe{ome2-bounds}--\refe{ome4-bounds} we obtain the following result.
\begin{corollary}
If $G$ is a finite abelian group of exponent $m\in\{2,3,4\}$, then for any
non-empty subset $A\seq G$ and generating subset $S\seq G$ we have
  $$ \prt_S(A) \ge|G|\,\ome_m(|A|/|G|) \ge |A|\log_m \frac{|G|}{|A|}. $$
\end{corollary}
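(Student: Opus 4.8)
The plan is to read off the Corollary as a formal consequence of Theorems~\reft{isoper} and \reft{Fm} together with the lower bounds of \refe{ome2-bounds}--\refe{ome4-bounds}, performing only a substitution of the function $f$ and of the density $x=|A|/|G|$; no genuinely new estimate is required.

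For the first inequality, I would apply Theorem~\reft{isoper} with the choice $f=F_m$, where $F_m$ is the maximal function of Theorem~\reft{Fmp}. This is legitimate because $F_m\in\cF_m$ and because the exponent of $G$ equals $m$, hence divides $m$, so the hypotheses of Theorem~\reft{isoper} are met. The theorem then gives
$$ \prt_S(A)\ge\frac1m\,|G|\,F_m(|A|/|G|). $$
Since $m\in\{2,3,4\}$, the equality case of Theorem~\reft{Fm} yields $F_m=m\ome_m$ on $[0,1]$; substituting this and cancelling the factor $m$ produces $\prt_S(A)\ge|G|\,\ome_m(|A|/|G|)$, which is precisely the first inequality.

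For the second inequality, I would invoke the left-hand bounds of \refe{ome2-bounds}--\refe{ome4-bounds}. In each of the three cases $m\in\{2,3,4\}$ these read $\ome_m(x)\ge x\log_m(1/x)$, valid for all $x\in(0,1]$. Putting $x=|A|/|G|$---which lies in $(0,1]$ precisely because $A$ is non-empty---and multiplying through by $|G|$ turns this into $|G|\,\ome_m(|A|/|G|)\ge|A|\log_m(|G|/|A|)$, completing the chain.

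I expect no real obstacle: the statement is a corollary, and the only matters requiring attention are bookkeeping ones---confirming that $F_m$ belongs to $\cF_m$ so that Theorem~\reft{isoper} applies, that the divisibility condition on the exponent is trivially satisfied when the exponent equals $m$, and that the non-emptiness of $A$ places the density in the half-open interval $(0,1]$ on which the growth estimates are stated (at the endpoint $x=1$ both sides of the second inequality vanish, so the bound remains valid there).
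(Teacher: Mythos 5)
Your proposal is correct and follows exactly the route the paper intends: the Corollary is stated there as an immediate combination of Theorem~\reft{isoper} (applied with $f=F_m$), the equality $F_m=m\ome_m$ for $m\in\{2,3,4\}$ from Theorem~\reft{Fm}, and the lower bounds in \refe{ome2-bounds}--\refe{ome4-bounds}. Your bookkeeping checks (divisibility of the exponent, $F_m\in\cF_m$ via Theorem~\reft{Fmp}, and the density lying in $(0,1]$) are all sound.
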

We remark that in the case $m=2$, the resulting estimate
$\prt_S(A)\ge|A|\log_2(|G|/|A|)$ is well-known, the first appearance in the
literature we are aware of being \cite[Lemma~4.1]{b:cfgs}.

Theorem~\reft{Fm} can be considerably improved for large values of $m$.
\begin{proposition}\label{p:Fm-upper}
For any integer $m\ge 2$ and real $x\in(0,1]$ we have
  $$ F_m(x) \le \frac{m}{m-1}\,ex\ln(e/x). $$
\end{proposition}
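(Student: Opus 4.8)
The plan is to use that $F_m$ itself belongs to $\cF_m$ (Theorem~\reft{Fmp}), so that it satisfies the boundary condition~\refe{bound-cond} and the relaxed convexity~\refe{m}, together with the a priori bound $\max F_m\le\frac m{m-1}$, which follows from $F_m\le m\ome_m$ (Theorem~\reft{Fm}) and~\refe{max-omega}. First I would extract a one-step contraction from~\refe{m}: applying it to the $m$-tuple consisting of $m-k$ copies of $0$ and $k$ copies of a point $y\in[0,1]$, and setting $\lam:=k/m$, the inequality $F_m(0)\le0$ gives
$$ F_m(\lam y)\le\lam F_m(y)+y,\qquad \lam\in\{1/m\longc(m-1)/m\}. $$
This contraction, which controls $F_m$ at a point through its value at the $1/\lam$ times larger point, is the engine of the argument.

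To reveal where the constant $e$ comes from, and to linearize the recursion, I would substitute $G(s):=e^sF_m(e^{-s})$ for $s\ge0$. Since $F_m(1)=0$ we have $G(0)=0$, and the contraction becomes
$$ G(s+c)\le G(s)+e^c,\qquad c=\ln(1/\lam)=\ln(m/k). $$
Thus $G$ increases by at most $e^c$ over each admissible step of length $c$, i.e.\ with mean slope $e^c/c$, a quantity minimized at $c=1$ (that is, $\lam=1/e$) where it equals $e$. Iterating this from $0$ to $s=\ln(1/x)$, writing $\ln(1/x)=Nc+\rho$ with $N=\lfl\ln(1/x)/c\rfl$ and $\rho\in[0,c)$, and bounding the leftover $G(\rho)\le\frac m{m-1}e^{c}$ via $\max F_m\le\frac m{m-1}$, I would arrive at
$$ F_m(x)\le e^{c}x\(\tfrac m{m-1}+\tfrac{\ln(1/x)}c\). $$

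It then remains to choose $\lam$ (hence $c$) so that this does not exceed $\frac m{m-1}\,ex\ln(e/x)=\frac m{m-1}\,ex\bigl(1+\ln(1/x)\bigr)$. As the bound is affine in $\ln(1/x)\ge0$, this reduces to two inequalities: $e^{c}\le e$ (constant terms) and $e^{c}/c\le\frac m{m-1}e$ (slopes); equivalently, with $g(\lam):=-\lam\ln\lam$, one needs $\lam\ge1/e$ and $g(\lam)\ge\frac{m-1}{me}$. I would take $\lam^\ast:=\lcl m/e\rcl/m$, the least multiple of $1/m$ that is $\ge1/e$, so that the first condition holds automatically and $0\le\lam^\ast-1/e<1/m$ (and $\lam^\ast<1$, so $k=\lcl m/e\rcl$ is a legitimate index).

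The hard part will be the second, quantitative condition, since the optimal value $\lam=1/e$ is almost never admissible and one must show the nearest admissible $\lam^\ast$ loses little. Because $g$ attains its maximum $1/e$ at $\lam=1/e$ with $g'(1/e)=0$ and $|g''|=1/\lam\le e$ on $[1/e,\lam^\ast]$, a second-order Taylor estimate gives $g(1/e)-g(\lam^\ast)\le\frac e{2m^2}$, which is at most $\frac1{me}=g(1/e)-\frac{m-1}{me}$ precisely when $m\ge e^2/2$, i.e.\ for all $m\ge4$. The remaining cases $m\in\{2,3\}$, where $\lam^\ast=\frac12$ and $\lam^\ast=\frac23$, I would settle by directly checking $g(\lam^\ast)\ge\frac{m-1}{me}$, which holds comfortably in both.
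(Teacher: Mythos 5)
Your proposal is correct, and it takes a genuinely different route from the paper's, even though both share the same engine: iterating a one-step contraction of the form $f(\lam y)\le \lam f(y)+y$ down a geometric progression, with the constant $e$ emerging from minimizing the mean slope $e^c/c$ at step $c=1$. The difference lies in where the contraction comes from and how the step is chosen. The paper first invokes Theorem~\reft{funny} together with $\sup F_m\le m/(m-1)$ (Corollary~\refc{mm-1}) to conclude that $f:=(1-m^{-1})F_m$ satisfies the two-point inequality \refe{twostars1} for \emph{every} real $\lam\in[0,1]$; this continuum of admissible ratios permits the $x$-dependent choice $\xi:=x^{1/k}$ with $k:=\lcl\ln(1/x)\rcl$, and the short computation $f(\xi^k)\le k\xi^{k-1}$ then gives $f(x)\le ex\ln(e/x)$ with no case analysis and no leftover term. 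You instead apply the defining inequality \refe{m} directly to $F_m\in\cF_m$ (Theorem~\reft{Fmp}), which restricts the ratio to $\lam\in\{1/m\longc(m-1)/m\}$, and you pay for the discreteness with a fixed step $\lam^\ast=\lcl m/e\rcl/m$ plus a second-order estimate on $g(\lam)=-\lam\ln\lam$ near its flat maximum at $1/e$; I verified the endgame: the slack $\frac1{me}=g(1/e)-\frac{m-1}{me}$ indeed absorbs the discretization loss $\frac{e}{2m^2}$ exactly when $m\ge e^2/2$, i.e. $m\ge4$, while $g(1/2)=\frac12\ln 2>\frac1{2e}$ and $g(2/3)=\frac23\ln\frac32\approx 0.270>\frac2{3e}\approx 0.245$ settle $m\in\{2,3\}$. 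Amusingly, the factor $\frac{m}{m-1}$ enters at different places: in the paper via the rescaling that turns the $F_m(\lam)$-correction of Theorem~\reft{funny} into the fixed correction of \refe{twostars1}, in your argument via the bound $G(\rho)\le\frac{m}{m-1}\,e^c$ on the leftover segment. What each buys: your proof bypasses Theorem~\reft{funny} entirely, needing only $F_m\in\cF_m$, $F_m(1)=0$, and the sup bound (for which Corollary~\refc{mm-1} is a slightly more economical citation than Theorem~\reft{Fm} plus \refe{max-omega}, though both are available at this point), and it shows that restricting to the rational ratios $k/m$ costs nothing for any $m\ge2$; the paper's detour through Theorem~\reft{funny} yields the cleaner, reusable fact that \emph{any} function satisfying \refe{bound-cond} and \refe{twostars1} is bounded by $ex\ln(e/x)$, with a shorter and uniform computation.
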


For the lower bound, we notice that Proposition~\refp{exln1x} yields
  $$ F_m(x)\ge ex\ln(1/x), $$
for each $m\ge 2$ and $x\in(0,1]$.

We deduce Proposition~\refp{Fm-upper} from the following result which, in
view of Theorem~\reft{Fm}, generalizes \refe{gen-bp} and
Theorem~\reft{gen-bp3}.
\begin{theorem}\label{t:funny}
Let $m\ge 2$ be an integer. Then for any function $f\in\cF_m$ and any
$\lam,x,y\in[0,1]$ with $x\le y$ we have
\begin{equation}\label{e:gen-bp-m}
  f(\lam x+(1-\lam)y) \le \lam f(x)+(1-\lam)f(y) + (y-x)F_m(\lam).
\end{equation}
Moreover, if $f$ is a function, defined on the whole real line and satisfying
\refe{m} for any real $x_1\longc x_m$ with $\min_i x_i=x_1$ and $\max_i
x_i=x_m$, then \refe{gen-bp-m} holds for any real $x\le y$ and
$\lam\in[0,1]$.
\end{theorem}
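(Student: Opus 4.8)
The plan is to fix $f\in\cF_m$ and reals $x\le y$, and to reduce \refe{gen-bp-m} to the extremal (maximality) property of $F_m$ supplied by Theorem~\reft{Fmp}. If $x=y$ the inequality is an identity, so I assume $x<y$ and introduce the single-variable function $g$ on $[0,1]$ defined by
  $$ g(\lam) := \frac{f(\lam x+(1-\lam)y) - \lam f(x) - (1-\lam)f(y)}{y-x}. $$
With this normalization, \refe{gen-bp-m} is precisely the assertion $g(\lam)\le F_m(\lam)$ for all $\lam\in[0,1]$. Since $F_m$ is the pointwise largest element of $\cF_m$, it therefore suffices to prove that $g\in\cF_m$; that is, that $g$ meets the boundary condition \refe{bound-cond} and the relaxed-convexity condition \refe{m}.

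First I would dispatch the boundary condition. Substituting $\lam=0$ and $\lam=1$ collapses the numerator of $g$ to $f(y)-f(y)=0$ and $f(x)-f(x)=0$ respectively, so $g(0)=g(1)=0$ and \refe{bound-cond} holds with equality. The heart of the argument is then verifying \refe{m} for $g$. Given $\lam_1\longc\lam_m\in[0,1]$ with $\lam_1=\min_i\lam_i$ and $\lam_m=\max_i\lam_i$, set $u_i:=\lam_i x+(1-\lam_i)y = y-\lam_i(y-x)$. Because $y-x>0$, the affine map $\lam\mapsto u$ is order-reversing, whence $u_1=\max_i u_i$, $u_m=\min_i u_i$, and $\max_i u_i-\min_i u_i = u_1-u_m = (\lam_m-\lam_1)(y-x)$. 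Noting that the arithmetic mean of the $u_i$ equals $\bar\lam x+(1-\bar\lam)y$ with $\bar\lam:=(\lam_1\longp\lam_m)/m$, I apply \refe{m} for $f$ to the numbers $u_1\longc u_m$ to get
  $$ f\!\left(\frac{u_1\longp u_m}{m}\right) \le \frac{f(u_1)\longp f(u_m)}{m} + (\lam_m-\lam_1)(y-x). $$
The linear-in-$\lam$ terms $-\lam f(x)-(1-\lam)f(y)$ average correctly and appear identically in $g(\bar\lam)$ and in $\tfrac1m\sum_i g(\lam_i)$, so dividing the displayed inequality by $y-x$ and subtracting these terms yields exactly
  $$ g\!\left(\frac{\lam_1\longp\lam_m}{m}\right) \le \frac{g(\lam_1)\longp g(\lam_m)}{m} + (\lam_m-\lam_1), $$
which is \refe{m} for $g$. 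Hence $g\in\cF_m$, and the inequality $g\le F_m$ delivers \refe{gen-bp-m}.

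For the final (``moreover'') assertion, the same construction works verbatim: when $f$ is defined on all of $\R$ and satisfies \refe{m} for every real tuple, the points $u_i$ lie in $[x,y]\seq\R$ and hence remain in the domain of $f$, so the computation above is unaffected. The resulting $g$ is again a function on $[0,1]$ obeying \refe{bound-cond} and \refe{m}, so $g\in\cF_m$ and the maximality of $F_m$ again closes the argument. I expect no genuine obstacle here; the one point demanding care is the order-reversal $u_1=\max_i u_i$, $u_m=\min_i u_i$, which is exactly what makes the $(y-x)$ normalization cancel against $\max_i u_i-\min_i u_i$, reproducing the additive term $(\lam_m-\lam_1)$ in the convexity condition for $g$ rather than a term still scaled by $y-x$.
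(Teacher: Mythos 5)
Your proposal is correct and is essentially the paper's own proof: the paper likewise introduces $f_{x,y}(\lam):=\frac1{y-x}\big(f(\lam x+(1-\lam)y)-\lam f(x)-(1-\lam)f(y)\big)$, verifies via the same order-reversing substitution $x_i=\lam_i x+(1-\lam_i)y$ (noting $\min_i x_i=x_m$, $\max_i x_i=x_1$) that this function satisfies \refe{m} and hence lies in $\cF_m$, and concludes by the extremal property of $F_m$ from Theorem~\reft{Fmp}. Your explicit checks of the boundary condition $g(0)=g(1)=0$, the trivial case $x=y$, and the domain point in the ``moreover'' clause are details the paper leaves implicit, but the route is the same.
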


The rest of the paper, devoted to the proof of the results discussed above,
is partitioned into three sections and appendix. In Section \refs{Takagi} we
study the generalized Takagi functions, proving Boros-P\'ales inequality
\refe{bp} and its $3$-adic analog, Theorem~\reft{bp3}, and establishing an
important lemma used in both proofs and also in the proofs of
Theorems~\reft{lex} and~\reft{Fm}. Section ~\refs{isoper} deals with the
isoperimetric problem: we prove here Theorems~\reft{lex} and \reft{isoper}.
In Section \refs{Fm} we investigate the classes $\cF_m$ and the functions
$F_m$, and prove Propositions~\refp{exln1x} and~\refp{Fm-upper}, and
Theorems~\reft{Fmp}, \reft{Fm}, and~\reft{funny}. As remarked above,
Theorem~\reft{main1} is an immediate consequence of Theorem~\reft{isoper} and
Proposition~\refp{exln1x}, while \refe{gen-bp} and Theorem~\reft{gen-bp3} (in
view of Theorem~\reft{Fm}) are particular cases of Theorem~\reft{funny};
hence no additional proofs are needed. In the Appendix we prove estimates
\refe{ome2-bounds}--\refe{omem-bounds}.

\section{The generalized Takagi functions: proofs of \\
  the Boros-P\'ales inequality and Theorem~\reft{bp3}}\label{s:Takagi}

The following lemma, used in the proofs of the Boros-P\'ales inequality and
Theorems~\reft{lex}, \reft{bp3}, and~\reft{Fm}, is known in the case $m=2$;
see \refb{hy} or \cite[Theorem~5.1]{b:ak}.
\begin{lemma}\label{l:takagi}
Let $m\ge 2$ be an integer. Then for any integer $r\ge 1$ and $n$, the latter
of which is not divisible by $m$, writing $n=tm+\rho$ with integer $t$ and
$\rho\in[1,m-1]$, we have
  $$ \ome_m\(\frac n{m^r}\) = \(1-\frac{\rho}m\)\ome_m\(\frac t{m^{r-1}}\)
                       + \frac{\rho}m\,\ome_m\(\frac{t+1}{m^{r-1}}\)
                                                      \, + \, \frac1{m^r}. $$
\end{lemma}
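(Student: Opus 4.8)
The plan is to exploit the self-similarity of $\ome_m$ encoded in its defining series, comparing the three arguments term by term. Since $n/m^r$, $t/m^{r-1}$, and $(t+1)/m^{r-1}$ are all $m$-adic rationals, the series \refe{omega-m-def} terminates at each of them: once $m^kx$ becomes an integer, the term $\|m^kx\|_{1/m}$ vanishes. First I would record the elementary identity
  $$ \frac n{m^r} = \(1-\frac\rho m\)\frac t{m^{r-1}} + \frac\rho m\,\frac{t+1}{m^{r-1}}, $$
which follows at once from $n=tm+\rho$; thus $n/m^r$ is the convex combination of the two endpoints with weights $1-\rho/m$ and $\rho/m$, and the assertion takes the equivalent form
  $$ \ome_m\(\tfrac n{m^r}\) - \(1-\tfrac\rho m\)\ome_m\(\tfrac t{m^{r-1}}\) - \tfrac\rho m\,\ome_m\(\tfrac{t+1}{m^{r-1}}\) = \frac1{m^r}. $$

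Writing $g(x):=\|x\|_{1/m}$ and letting $D_k$ denote the $k$-th contribution to the left-hand side, I would evaluate the three scaled arguments $m^k(n/m^r)$, $m^k(t/m^{r-1})$, and $m^k((t+1)/m^{r-1})$ for each $k$. For $k\ge r$ every one of them is an integer, so $D_k=0$. For $k\le r-2$ all three lie in a common interval of the form $[j/m^s,(j+1)/m^s]$ with $s=r-1-k\ge1$, and $m^k(n/m^r)$ remains the same convex combination of the two endpoints $m^k(t/m^{r-1})$ and $m^k((t+1)/m^{r-1})$. The single surviving index is $k=r-1$: there $m^{r-1}(t/m^{r-1})=t$ and $m^{r-1}((t+1)/m^{r-1})=t+1$ are integers, while $m^{r-1}(n/m^r)=t+\rho/m$, giving $g$-values $0$, $0$, and $\|\rho/m\|_{1/m}=1/m$. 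The truncation is active precisely because $\rho\in[1,m-1]$ forces $\rho/m\in[1/m,1-1/m]$, whence $\|\rho/m\|\ge1/m$; so $D_{r-1}=m^{-(r-1)}\cdot\tfrac1m=m^{-r}$, which is exactly the claimed value.

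The crux, and the step I expect to require the most care, is showing that $D_k=0$ for every $k\le r-2$. This reduces to the claim that $g$ is affine on each interval $[j/m^s,(j+1)/m^s]$ with $s\ge1$: granting this, affineness sends $g$ of a convex combination to the corresponding convex combination of $g$-values, killing $D_k$. To justify the claim I would locate the breakpoints of the $1$-periodic trapezoidal function $g$, which occur only at integers and at points lying at distance $1/m$ from an integer, and observe that all of these are integer multiples of $1/m^s$ whenever $s\ge1$; consequently no breakpoint lies in the open interval $(j/m^s,(j+1)/m^s)$, and $g$ is linear there. Alternatively, the whole lemma can be obtained by induction on $r$ from the functional equation $\ome_m(x)=\|x\|_{1/m}+\tfrac1m\,\ome_m(mx)$ (a direct reindexing of \refe{omega-m-def}), the base case $r=1$ being the evaluation $\ome_m(\rho/m)=1/m$ and the inductive step reducing, once more, to the affineness of $\|\cdot\|_{1/m}$ on $m$-adic intervals. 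I would present whichever version turns out shorter.
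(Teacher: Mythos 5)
Your proposal is correct and follows essentially the same route as the paper's proof: a term-by-term comparison of the series in which the summands with $k\ge r$ vanish, the summand $k=r-1$ contributes exactly $m^{-r}$ via $\|t+\rho/m\|_{1/m}=1/m$, and the summands with $k\le r-2$ vanish because $\|\cdot\|_{1/m}$ is affine on $[t/m^{s},(t+1)/m^{s}]$ for $s=r-1-k\ge 1$ --- your breakpoint-location argument (breakpoints only at $N$ and $N\pm 1/m$, all multiples of $1/m^{s}$) is just a repackaging of the paper's observation that no point $N+\eps/m$, $\eps\in\{0,\pm 1\}$, lies in the open interval. The inductive variant via $\ome_m(x)=\|x\|_{1/m}+\frac1m\,\ome_m(mx)$ that you sketch is also sound, but it rests on the same affineness claim, so it is not a genuinely different argument.
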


\begin{proof}
We want to prove that
  $$ \sum_{k=0}^\infty m^{-k} \( \|m^{k-r}n\|_{1/m}
        - \(1-\frac{\rho}m\)\|m^{k+1-r}t\|_{1/m}
               - \frac{\rho}m \,\|m^{k+1-r}(t+1)\|_{1/m} \) = \frac1{m^r}. $$
We notice that all the summands in the left-hand side, corresponding to
 $k\ge r$, vanish, while the summand, corresponding to $k=r-1$, contributes
$m^{-(r-1)}(1/m)=m^{-r}$ to the sum. Consequently, to complete the proof it
suffices to show that
  $$ m\|m^{k-r}n\|_{1/m}
          = (m-\rho)\|m^{k+1-r}t\|_{1/m}+\rho\,\|m^{k+1-r}(t+1)\|_{1/m},
                                                       \quad k\in[0,r-2]. $$
To this end we prove that the interval $(m^{k+1-r}t,m^{k+1-r}(t+1))$ (of
which $m^{k-r}n$ is an internal point) does not contain any number of the
form $N+\eps/m$ with integer $N$ and $\eps\in\{0,\pm1\}$, and therefore
$\|x\|_{1/m}$ is a linear function of $x$ on this interval. Indeed, if we had
  $$ m^{k+1-r}t < N + \eps/m < m^{k+1-r}(t+1), $$
then, multiplying by $m^{r-k-1}$, we would get $t<Nm^{r-k-1}+\eps
m^{r-k-2}<t+1$, which cannot hold since the midterm is an integer.
\end{proof}

For the rest of this section, for integer $n$ and $m\ge 2$ we let
  $$ \del_m(n) := \begin{cases}
                     0 &\ \text{if $m$ divides $n$}, \\
                     1 &\ \text{if $m$ does not divide $n$}.
                   \end{cases} $$
We record the following immediate corollary of Lemma \refl{takagi}.
\begin{corollary}\label{c:takagi}
For any integer $r\ge 1$ and $n$, we have
  $$ \ome_2\(\frac{n}{2^r}\)
       = \frac12\, \ome_2\(\frac{\lfl n/2\rfl}{2^{r-1}}\)
             + \frac12\, \ome_2\(\frac{\lcl n/2\rcl}{2^{r-1}}\)
                                        \, + \, \frac{\del_2(n)}{2^r}, $$
where $\lfl\cdot\rfl$ and $\lcl\cdot\rcl$ denote the floor and the ceiling
functions, respectively.
\end{corollary}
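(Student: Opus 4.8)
The plan is to reduce everything to Lemma~\refl{takagi} by splitting on the parity of $n$. Since Lemma~\refl{takagi} is stated only for $n$ not divisible by $m$, the case $m=2$ of that lemma already delivers the odd part of the corollary, while the even part turns out to be a tautology requiring no appeal to the lemma at all. Thus the corollary really is immediate, and the only thing to watch is that the even case falls outside the lemma's hypothesis and must be noted separately.

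First I would treat the case where $n$ is odd, so that $\del_2(n)=1$. Here I apply Lemma~\refl{takagi} with $m=2$: writing $n=2t+\rho$ with $\rho\in[1,m-1]=\{1\}$ forces $\rho=1$ and $t=\lfl n/2\rfl$, and since $n$ is odd we also have $t+1=\lcl n/2\rcl$. Substituting these, together with the fact that $1-\rho/m=\rho/m=\tfrac12$, into the conclusion of the lemma yields exactly
  $$ \ome_2\(\frac{n}{2^r}\) = \frac12\,\ome_2\(\frac{\lfl n/2\rfl}{2^{r-1}}\) + \frac12\,\ome_2\(\frac{\lcl n/2\rcl}{2^{r-1}}\) + \frac1{2^r}, $$
which is precisely the asserted identity with $\del_2(n)=1$.

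Next I would handle the case where $n$ is even, so that $\del_2(n)=0$. Now $\lfl n/2\rfl=\lcl n/2\rcl=n/2$, whence the claimed right-hand side collapses to $\ome_2\((n/2)/2^{r-1}\)$. Since $(n/2)/2^{r-1}=n/2^r$, this equals the left-hand side, and the identity holds trivially. There is no genuine obstacle in the argument: the only subtlety is recognizing that the lemma excludes even $n$, and that in this excluded case the two fractions in the statement coincide, reducing the identity to an equality of equal arguments of $\ome_2$.
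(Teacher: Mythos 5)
Your proof is correct and is exactly the argument the paper intends: it records the corollary as an ``immediate'' consequence of Lemma~\refl{takagi}, and your parity split (the lemma with $m=2$, $\rho=1$, $t=\lfl n/2\rfl$, $t+1=\lcl n/2\rcl$ for odd $n$; the trivial collapse of both sides to $\ome_2\bigl((n/2)/2^{r-1}\bigr)$ for even $n$) simply makes that explicit. Nothing is missing, and your remark that even $n$ falls outside the lemma's hypothesis but poses no obstacle is the right observation.
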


\begin{proof}[Proof of Boros-P\'ales inequality \refe{bp}]
Since $\ome_2$ is a continuous function, it suffices to show that
  $$ \ome_2\bfr{x+y}{r}
          \le \frac12\, \ome_2\bfr{x}{r-1} + \frac12\, \ome_2\bfr{y}{r-1}
                                                     + \frac1{2^r}\,|x-y|, $$
for any integer $x,y$, and $r\ge 1$. We use induction on $r$. The case $r=1$
is immediate since $\ome_2(x)=\ome_2(y)=0$ and $\ome_2((x+y)/2)$ is equal to
$0$ or $1$ depending on whether $x$ and $y$ are of the same or of distinct
parity. Thus, we assume that $r\ge 2$. Moreover, we assume that $x$ is odd;
clearly, this does not restrict the generality.

Applying Corollary \refc{takagi} with $n=x+y$ and using the induction
hypothesis, we get
\begin{align*}
  \ome_2\bfr{x+y}{r}
    &= \frac12\, \ome_2\bfr{\lfl(x+y)/2\rfl}{r-1}
                 + \frac12\, \ome_2\bfr{\lcl(x+y)/2\rcl}{r-1}
                                              + \frac1{2^r}\,\del_2(x+y) \\
    &=  \frac12\, \ome_2\bfr{(x-1)/2+\lfl(y+1)/2\rfl}{r-1}
                 + \frac12\, \ome_2\bfr{(x+1)/2+\lcl(y-1)/2\rcl}{r-1} \\
    &{\hskip 4.0in} + \frac1{2^r}\,\del_2(x+y) \\
    &\le \frac14\, \( \ome_2\bfr{x-1}{r-1} + \ome_2\bfr{x+1}{r-1} \) \\
    &{\hskip 0.6in}
           + \frac14\, \( \ome_2\bfr{2\lfl(y+1)/2\rfl}{r-1}
              + \ome_2\bfr{2\lcl(y-1)/2\rcl}{r-1} \) \\
    &{\hskip 0.6in}
      + \frac1{2^r}\,\left| \lfl \frac{y+1}2\rfl-\frac{x-1}2 \right|
      + \frac1{2^r}\,\left| \lcl \frac{y-1}2\rcl-\frac{x+1}2 \right|
      + \frac1{2^r}\,\del_2(x+y).
\end{align*}
We now notice that, by Corollary \refc{takagi},
  $$ \frac12\, \( \ome_2\bfr{x-1}{r-1} + \ome_2\bfr{x+1}{r-1} \)
                                 = \ome_2\bfr{x}{r-1}\,-\,\frac1{2^{r-1}} $$
and
  $$ \frac12\, \( \ome_2\bfr{2\lfl(y+1)/2\rfl}{r-1}
           + \ome_2\bfr{2\lcl(y-1)/2\rcl}{r-1} \)
                = \ome_2\bfr{y}{r-1} - \frac{\del_2(y)}{2^{r-1}}, $$
as it follows easily by considering separately the cases of even and odd $y$.
Consequently,
\begin{align*}
  \ome_2\bfr{x+y}{r}
    &\le \frac12\,\ome_2\bfr{x}{r-1}+\frac12\,\ome_2\bfr{y}{r-1}
                     + \frac1{2^r}\,(\del_2(x+y)-\del_2(y)-1) \\
    &{\hskip 2in} + \frac1{2^r}\, \left| \lfl \frac{y-x+2}2 \rfl \right|
                  + \frac1{2^r}\, \left| \lcl \frac{y-x-2}2 \rcl \right|.
\end{align*}
To complete the proof we observe that $\del_2(x+y)-\del_2(y)-1\le 0$, and
that if $x\neq y$ (in which case the assertion is trivial), then
$\lfl(y-x+2)/2\rfl$ and $\lcl(y-x-2)/2\rcl$ are of the same sign, whence
\begin{align*}
  |\lfl(y-x+2)/2\rfl| + |\lcl(y-x-2)/2\rcl|
     &= |\lfl(y-x+2)/2\rfl + \lcl(y-x-2)/2\rcl| \\
     &= |\lfl(y-x)/2\rfl+\lcl(y-x)/2\rcl| \\
     &= |y-x|.
\end{align*}
\end{proof}

To prove Theorem~\reft{bp3} we need yet another corollary of
Lemma~\refl{takagi}.
\begin{corollary}\label{c:takagi3} Let $r\ge 1$ and $n$ be
integers. If $\xi_n\in\{-1,0,1\}$ and $\zet_n\in\{-2,0,2\}$ are defined by
$n\equiv\xi_n\equiv\zet_n\mmod 3$, then
  $$ \ome_3\tfr{n}{r} = \frac23\, \ome_3\tfr{n-\xi_n}{r}
           + \frac13\, \ome_3\tfr{n-\zet_n}{r} + \frac{\del_3(n)}{3^r}. $$
\end{corollary}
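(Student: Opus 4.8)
The plan is to read the claim off directly from Lemma~\refl{takagi} specialized to $m=3$, reorganizing the resulting identity into the stated symmetric form; no new analysis is needed, only bookkeeping. First I would dispose of the trivial case $3\mid n$: here $\xi_n=\zet_n=0$ and $\del_3(n)=0$, so both sides reduce to $\ome_3\tfr{n}{r}$ and there is nothing to prove. Thus I may assume $3\nmid n$ and write $n=3t+\rho$ with $\rho\in\{1,2\}$, so that Lemma~\refl{takagi} gives $\ome_3\tfr{n}{r}=\(1-\frac{\rho}3\)\ome_3\(\frac{t}{3^{r-1}}\)+\frac{\rho}3\,\ome_3\(\frac{t+1}{3^{r-1}}\)+\frac1{3^r}$, where $\frac1{3^r}=\frac{\del_3(n)}{3^r}$ since $3\nmid n$.

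Next I would put the two interpolation nodes over the common denominator $3^r$ using $\ome_3\(\frac{t}{3^{r-1}}\)=\ome_3\tfr{3t}{r}$ and $\ome_3\(\frac{t+1}{3^{r-1}}\)=\ome_3\tfr{3t+3}{r}$, and then express $3t$ and $3t+3$ in terms of $n,\xi_n,\zet_n$. The guiding observation is that, since $\xi_n$ is the representative of $n$ in $\{-1,0,1\}$, the multiple $n-\xi_n$ is the \emph{nearest} multiple of $3$ to $n$, while $n-\zet_n$ is the next-nearest one (two units away on the opposite side). Concretely, for $\rho=1$ one has $3t=n-1=n-\xi_n$ and $3t+3=n+2=n-\zet_n$, whereas for $\rho=2$ one has $3t=n-2=n-\zet_n$ and $3t+3=n+1=n-\xi_n$.

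Finally I would verify that the interpolation weights line up across the two cases. In both cases the weight attached to the node $n-\xi_n$ is $2/3$ and the weight attached to $n-\zet_n$ is $1/3$: for $\rho=1$ the node $n-\xi_n=3t$ carries weight $1-\rho/3=2/3$, while for $\rho=2$ the node $n-\xi_n=3t+3$ carries weight $\rho/3=2/3$, and symmetrically for $n-\zet_n$. Substituting these identifications into the displayed form of Lemma~\refl{takagi} reproduces the asserted identity exactly.

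The main obstacle here is not analytic but combinatorial: everything hinges on checking that the weight $2/3$ is always attached to $n-\xi_n$ and $1/3$ to $n-\zet_n$, even though the physical roles of the nodes $3t$ and $3t+3$ are \emph{swapped} between the residue classes $\rho=1$ and $\rho=2$. This swap is compensated exactly by the swap in the weights $1-\rho/3$ and $\rho/3$, which is the content of the corollary's symmetric formulation. Since this is precisely where a sign or case error could slip in, I would present the two cases $\rho=1$ and $\rho=2$ explicitly rather than attempt to unify them prematurely.
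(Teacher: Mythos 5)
Your proposal is correct and is exactly the verification the paper leaves implicit: the paper states Corollary~\refc{takagi3} without proof, as an immediate specialization of Lemma~\refl{takagi} to $m=3$ (plus the trivial case $3\mid n$). Your case analysis for $\rho=1$ and $\rho=2$, including the check that the swap of the nodes $3t$ and $3t+3$ between residue classes is compensated by the swap of the weights $1-\rho/3$ and $\rho/3$ so that $n-\xi_n$ always carries weight $2/3$, is precisely the intended bookkeeping.
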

Observe that, with $\xi_n$ and $\zet_n$ defined as in Corollary
\refc{takagi3}, we have
\begin{equation}\label{e:xizet}
  2\xi_n+\zet_n=0.
\end{equation}

\begin{proof}[Proof of Theorem~\reft{bp3}]
By continuity of $\ome_3$, it suffices to show that
  $$ \ome_3\tfr{x+y+z}{r}
       \le \frac13\, \ome_3\tfr{x}{r-1} + \frac13\, \ome_3\tfr{y}{r-1}
               + \frac13\, \ome_3\tfr{z}{r-1} + \frac1{3^r}\,(z-x), $$
for any integer $r\ge 1$ and $x\le y\le z$.

For integer $r\ge 0$ and $n$, let
  $$ T_r(n) := \sum_{k=1}^r 3^k \, \|3^{-k}n\|_{1/3}. $$
Thus, $T_0(n)=0$, $T_1(n)=\del_3(n)$, $T_r(-n)=T_r(n)$, and
$T_r(3n)=3T_{r-1}(n)$; these simple observations may be used below without
special references. Furthermore,
  $$ 3^r \ome_3\tfr{n}{r}
               = \sum_{k=0}^{r-1} 3^{r-k} \|3^{k-r} n\|_{1/3} = T_r(n); $$
therefore, keeping the notation of Corollary \refc{takagi3}, we can re-write
its conclusion as
\begin{equation}\label{e:Txizet}
  T_r(n) = \frac23\, T_r(n-\xi_n) + \frac13\, T_r(n-\zet_n) + \del_3(n),
\end{equation}
and the estimate we have to prove as
\begin{equation}\label{e:toprove}
  T_r(x+y+z)
                  \le T_{r-1}(x)+T_{r-1}(y)+T_{r-1}(z)+(z-x).
\end{equation}
To establish \refe{toprove} we use induction on $r$. For $r=1$ the assertion
is easy to verify in view of $T_0=0$ and $T_1(x+y+z)=\del_3(x+y+z)$, and we
assume that $r\ge 2$. We also assume that $x$ is strictly smaller than $z$;
for if $x=y=z$, then \refe{toprove} is immediate from $T_r(3x)=3T_{r-1}(x)$.

If $x,y$, and $z$ are all divisible by $3$, then the assertion follows easily
from the induction hypothesis. Otherwise, changing (simultaneously) the signs
of $x,y$, and $z$, if necessary, we can assume that one of the following
holds:
\begin{itemize}
\item[(i)]   $x\equiv y\equiv z\equiv 1\mmod 3$;
\item[(ii)]  two of the numbers $x,y$, and $z$ are congruent to $1$
    modulo $3$, and the third is divisible by $3$;
\item[(iii)] the numbers $x,y$, and $z$ are pairwise incongruent
    modulo $3$;
\item[(iv)]  two of the numbers $x,y$, and $z$ are divisible by $3$, and
    the third is congruent to $1$ modulo $3$;
\item[(v)]   two of the numbers $x,y$, and $z$ are congruent to $1$
    modulo $3$, and the third is congruent to $2$ modulo $3$.
\end{itemize}
We consider these five cases separately.

\subsection*{Case (i): $x\equiv y\equiv z\equiv 1\mmod 3$.}
In this case, using the induction hypothesis we get
\begin{align}\label{e:casei}
  T_r(x+y+z)
    &=   3T_{r-1}\(\frac{x-1}3+\frac{y-1}3+\frac{z+2}3\) \notag \\
    &\le 3T_{r-2}\(\frac{x-1}3\)+3T_{r-2}\(\frac{y-1}3\)
                                   + 3T_{r-2}\(\frac{z+2}3\) \notag \\
    &{}\hskip 3.25in  +(z-x+3) \notag \\
    &=   T_{r-1}(x-1)+T_{r-1}(y-1)+T_{r-1}(z+2) + (z-x+3).
\end{align}
Similarly,
\begin{equation}\label{e:caseia}
  T_r(x+y+z)
           \le T_{r-1}(x-1)+T_{r-1}(y+2)+T_{r-1}(z-1) + (z-x)
\end{equation}
and
\begin{equation}\label{e:caseib}
  T_r(x+y+z)
           \le T_{r-1}(x+2)+T_{r-1}(y-1)+T_{r-1}(z-1) + (z-x-3),
\end{equation}
except that we must add $3$ to the right-hand side of \refe{caseia} if
$y=z$, and to the right-hand side of \refe{caseib} if $x=y$.
Averaging \refe{casei}--\refe{caseib} and taking into account the observation
just made and the fact that if $n\equiv 1\pmod 3$, then
  $$ \frac23\,T_{r-1}(n-1)+\frac13\,T_{r-1}(n+2) = T_{r-1}(n) - 1 $$
(as it follows from \refe{Txizet}), we get \refe{toprove}.

\subsection*{Case (ii): two of $x,y$, and $z$ are congruent to $1$
  modulo $3$, and the third is divisible by $3$.}

Denote by $w$ the element of the set $\{x,y,z\}$ which is divisible by $3$,
and let $u$ be the smallest, and $v$ the largest of the two other elements.
By \refe{Txizet}, we have
\begin{align}\label{e:caseii}
  T_r(x+y+z) &= \frac23\, T_r\(x+y+z+1\)
                          + \frac13\,T_r\(x+y+z-2\) + 1 \notag \\
             &= 2 T_{r-1}\(\frac{u+v+w+1}3\)
                          + T_{r-1}\(\frac{u+v+w-2}3\) + 1.
\end{align}
By the induction hypothesis,
\begin{align}\label{e:caseiia}
  T_{r-1}\(\frac{u+v+w+1}3\)
    &=   T_{r-1}\(\frac{u-1}3 + \frac{v+2}3
                                            + \frac{w}3\) \notag \\
    &\le T_{r-2}\(\frac{u-1}3\)+T_{r-2}\(\frac{v+2}3\)
                                   + T_{r-2}\(\frac{w}3\) \notag \\
    &{}\hskip 2.5in                + \frac{z-x+3}3 \notag \\
    &=   \frac13\,T_{r-1}(u-1) + \frac13\,T_{r-1}(v+2)
                                   + \frac13\, T_{r-1}(w) \notag \\
    &{}\hskip 2.5in               + \frac{z-x+3}3
\end{align}
and similarly,
\begin{multline}\label{e:caseiib}
  T_{r-1}\(\frac{u+v+w+1}3\)
    \le \frac13\,T_{r-1}(u+2) + \frac13\,T_{r-1}(v-1)
                                   + \frac13\, T_{r-1}(w) \\
                                                  + \frac{z-x+3}3.
\end{multline}
Also,
\begin{align}\label{e:caseiic}
  T_{r-1}\(\frac{u+v+w-2}3\)
    &=   T_{r-1}\(\frac{u-1}3 + \frac{v-1}3
                                            + \frac{w}3\) \notag \\
    &\le T_{r-2}\(\frac{u-1}3\)+T_{r-2}\(\frac{v-1}3\)
                                   + T_{r-2}\(\frac{w}3\) \notag \\
    &{}\hskip 2.5in                + \frac{z-x+1}3 \notag \\
    &=   \frac13\,T_{r-1}(u-1) + \frac13\,T_{r-1}(v-1)
                                   + \frac13\, T_{r-1}(w) \notag \\
    &{}\hskip 2.5in               + \frac{z-x+1}3.
\end{align}

In fact, we need a slight refinement of \refe{caseiia}--\refe{caseiic} which
can be obtained by distinguishing the subcases where $w=x$ (meaning that it
is the smallest of the numbers $x,y,z$ that is divisible by $3$), $w=y$ (the
middle one is divisible by $3$) and $w=z$ (the largest one is divisible by
$3$). The reader will check it easily that in the first case ($w=x$), both
last summands in the right-hand sides of \refe{caseiia} and \refe{caseiib}
can be replaced with $(z-x+2)/3$, and the last summand in the right-hand side
of \refe{caseiic} can be replaced with $(z-x-1)/3$. Similarly, in the second
case ($w=y$), we can replace the last summands in the right-hand sides of
both \refe{caseiib} and \refe{caseiic} with $(z-x)/3$, and in the third case
($w=z$), both last summands in the right-hand sides of \refe{caseiia} and
\refe{caseiib} can be replaced with $(z-x+1)/3$. In any case, the sum of the
three summands does not exceed $z-x+1$. Taking this into account, adding up
\refe{caseiia}--\refe{caseiic}, and substituting the result into
\refe{caseii}, we get
\begin{multline*}
  T_r(x+y+z) \le \( \frac23\,T_{r-1}(u-1) + \frac13\,T_{r-1}(u+2) \) \\
             +  \( \frac23\,T_{r-1}(v-1) + \frac13\,T_{r-1}(v+2) \)
                 + T_{r-1}(w) + (z-x) + 2.
\end{multline*}
The result now follows from \refe{Txizet}.

\subsection*{Case (iii): $x,y$, and $z$ are pairwise incongruent modulo
              $3$.}

Using the induction hypothesis and the fact that
$\xi_x+\xi_y+\xi_z=\zet_x+\zet_y+\zet_z=0$ we obtain in this case
\begin{align}\label{e:caseiiia}
  T_r(x+y+z)
    &=   3T_{r-1}\(\frac{x-\xi_x}3+\frac{y-\xi_y}3+\frac{z-\xi_z}3\)
                                                                   \notag \\
    &\le 3T_{r-2}\(\frac{x-\xi_x}3\)+3T_{r-2}\(\frac{y-\xi_y}3\)
                                   + 3T_{r-2}\(\frac{z-\xi_z}3\) \notag \\
    &{}\hskip 2.75in        +(z-x-\xi_z+\xi_x) \notag \\
    &=   T_{r-1}(x-\xi_x)+T_{r-1}(y-\xi_y)+T_{r-1}(z-\xi_z) \notag \\
    &{}\hskip 2.75in        + (z-x-\xi_z+\xi_x).
\end{align}
Similarly,
\begin{multline}\label{e:caseiiib}
  T_r(x+y+z) \le T_{r-1}(x-\zet_x)+T_{r-1}(y-\zet_y)
                                                      +T_{r-1}(z-\zet_z) \\
       + (z-x-\zet_z+\zet_x+6),
\end{multline}
for $\max\{x-\zet_x,y-\zet_y,z-\zet_z\}\le z-\zet_z+3$ and
$\min\{x-\zet_x,y-\zet_y,z-\zet_z\}\ge x-\zet_x-3$. The assertion follows by
averaging \refe{caseiiia} and \refe{caseiiib} with the weights $2/3$ and
$1/3$, respectively, using \refe{Txizet}, and noticing that
\begin{multline*}
  -\del_3(x)-\del_3(y)-\del_3(z)
         + \frac23\,(-\xi_z+\xi_x)+\frac13\,(-\zet_z+\zet_x+6) \\
               = \frac13\,(\zet_x+2\xi_x) - \frac13\,(\zet_z+2\xi_z) = 0.
\end{multline*}

\subsection*{Case (iv): two of $x,y$, and $z$ are divisible by $3$, and the
  third is congruent to $1$ modulo $3$.}

By \refe{Txizet}, we have
\begin{equation}\label{e:caseiv}
  T_r(x+y+z) = 2 T_{r-1}\(\frac{x+y+z-1}3\)
                                 + T_{r-1}\(\frac{x+y+z+2}3\) + 1.
\end{equation}
By the induction hypothesis,
\begin{align}\label{e:caseiva}
  T_{r-1}\(\frac{x+y+z-1}3\)
    &=   T_{r-1}\(\frac{x-\xi_x}3 + \frac{y-\xi_y}3
                                            + \frac{z-\xi_z}3\) \notag \\
    &\le T_{r-2}\(\frac{x-\xi_x}3\)+T_{r-2}\(\frac{y-\xi_y}3\)
                                   + T_{r-2}\(\frac{z-\xi_z}3\) \notag \\
    &{}\hskip 2.2in                + \frac{z-x-\xi_z+\xi_x}3 \notag \\
    &=   \frac13\,T_{r-1}(x-\xi_x) + \frac13\,T_{r-1}(y-\xi_y)
                                   + \frac13\, T_{r-1}(z-\xi_z) \notag \\
    &{}\hskip 2.2in               + \frac{z-x-\xi_z+\xi_x}3
\end{align}
and
\begin{align}\label{e:caseivb}
  T_{r-1}\(\frac{x+y+z+2}3\)
    &=   T_{r-1}\(\frac{x-\zet_x}3 + \frac{y-\zet_y}3
                                            + \frac{z-\zet_z}3\) \notag \\
    &\le T_{r-2}\(\frac{x-\zet_x}3\)+T_{r-2}\(\frac{y-\zet_y}3\)
                                   + T_{r-2}\(\frac{z-\zet_z}3\) \notag \\
    &{}\hskip 2.2in                + \frac{z-x-\zet_z+\zet_x}3 \notag \\
    &=   \frac13\,T_{r-1}(x-\zet_x) + \frac13\,T_{r-1}(y-\zet_y)
                                   + \frac13\, T_{r-1}(z-\zet_z) \notag \\
    &{}\hskip 2.2in               + \frac{z-x-\zet_z+\zet_x}3.
\end{align}
The result follows from \refe{caseiv}--\refe{caseivb}, \refe{Txizet}, and
\refe{xizet}.

\subsection*{Case (v): two of $x,y$, and $z$ are congruent to $1$
  modulo $3$, and the third is congruent to $2$ modulo $3$.}

In this case \refe{caseiv} remain valid, while \refe{caseiva} is to be
replaced with
\begin{multline*}
  T_{r-1}\(\frac{x+y+z-1}3\)
    \le \frac13\,T_{r-1}(x-\xi_x) + \frac13\,T_{r-1}(y-\xi_y)
                                   + \frac13\, T_{r-1}(z-\xi_z) \\
             + \frac{z-x-\xi_z+\xi_x+2}3,
\end{multline*}
and \refe{caseivb} with
\begin{multline*}
  T_{r-1}\(\frac{x+y+z+2}3\)
    \le \frac13\,T_{r-1}(x-\zet_x) + \frac13\,T_{r-1}(y-\zet_y)
                                   + \frac13\, T_{r-1}(z-\zet_z) \\
             + \frac{z-x-\zet_z+\zet_x+4}3.
\end{multline*}
The proof can now be completed as in Case (iv).
\end{proof}

\section{The isoperimetric problem: proofs of Theorems~\reft{lex} and
  \reft{isoper}}\label{s:isoper}

\begin{proof}[Proof of Theorem~\reft{lex}]
We assume that $m$ is fixed and use induction on $r$, for each $r$ proving
the equality
  $$ \prt_S(\cI_n(C_m^r))=m^r\ome_m(n/m^r) $$
for all $n\in[0,m^r]$. The case $r=1$ is easy in view of
$\ome_m(0)=\ome_m(1)=0$ and since $\ome_m(n/m)=1/m$ for $n=1\longc m-1$;
suppose, therefore, that $r\ge 2$.

Let $s_0$ be the smallest element of $S$. Denote by $H$ the subgroup of
$C_m^r$, generated by the set $S_0:=S\stm\{s_0\}$, and for brevity, write
$A:=\cI_n(C_m^r)$. For $i=0\longc m-1$, let $A_i:=A\cap(is_0+H)$ and
$n_i=|A_i|$. Notice, that if $n=tm+\rho$ with integer $t\ge 0 $ and
$\rho\in[1,m]$, then
\begin{equation}\label{e:u1urho}
  n_0\longe n_{\rho-1}=t+1\quad\text{and}\quad n_\rho\longe n_{m-1}=t.
\end{equation}
We have
  $$ \prt_S(A)
           = \prt_{S_0}(A_0)\longp \prt_{S_0}(A_{m-1}) + (n_0-n_{m-1}), $$
the first $m$ summands counting those pairs $(a,s)$ with $a\in A$ and
 $s\in S_0$ such that $a+s\notin A$, and the last summand counting pairs
$(a,s_0)$ with $a\in A$ such that $a+s_0\notin A$. By the induction
hypothesis, as applied to the subsets $A_i-is_0$ of the group $H$ with the
generating subset $S_0$, we have then
  $$ \prt_S(A)
         = m^{r-1}\ome_m\(\frac{n_0}{m^{r-1}}\)
                \longp m^{r-1}\ome_m\(\frac{n_{m-1}}{m^{r-1}}\)
                                                        + (n_0-n_{m-1}). $$
Now if $m$ divides $n$, then $n_0\longe n_{m-1}=n/m$ and the assertion
follows immediately. If, on the other hand, $m$ does not divide $n$, then in
view of \refe{u1urho} and by Lemma~\refl{takagi}, the right-hand side is
equal to
  $$ m^r\, \( \frac{\rho\, \ome_m((t+1)/m^{r-1})
      + (m-\rho)\, \ome_m(t/m^{r-1})}{m}\,+\,\frac1{m^r} \)
                     = m^{r} \ome_m(n/m^r), $$
completing the proof.
\end{proof}

To prove Theorem~\reft{isoper} we need the following simple lemma.
\begin{lemma}\label{l:absum}
For any integer $m\ge 2$ and real $x_1\longc x_m$, we have
  $$ |x_2-x_1|+|x_3-x_2|\longp |x_m-x_{m-1}|+|x_1-x_m|
                                       \ge 2\( \max_i x_i - \min_i x_i \). $$
\end{lemma}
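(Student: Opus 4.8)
The plan is to read the left-hand side as the total length of the closed walk $x_1\to x_2\to\dotsb\to x_m\to x_1$ on the real line, and to compare it with twice the range $M-\mu$, where $M:=\max_i x_i$ and $\mu:=\min_i x_i$. If $M=\mu$ the inequality is trivial, so I would assume $M>\mu$ and fix indices $p,q$ with $x_p=M$ and $x_q=\mu$, noting that necessarily $p\neq q$.

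First I would cut the cyclic edge set $\{(1,2),(2,3)\longc(m-1,m),(m,1)\}$ at the two vertices $p$ and $q$, producing two arcs: one path running from $x_p$ to $x_q$ in the direction of increasing index (read cyclically), and the complementary path running from $x_q$ back to $x_p$. Cutting a cycle at two distinct vertices splits its edge set into exactly these two arcs, each edge used once; hence the left-hand side equals the sum, over the two arcs, of the absolute-difference lengths along them.

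The key step is then a single application of the triangle inequality along each arc: for any finite sequence $y_0,y_1\longc y_k$ one has $\sum_{j=1}^{k}|y_j-y_{j-1}|\ge|y_k-y_0|$. Applied to the two arcs, whose common endpoints are $x_p=M$ and $x_q=\mu$, each arc contributes at least $|M-\mu|=M-\mu$, and summing the two bounds gives exactly the desired $2(M-\mu)$.

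There is essentially no serious obstacle here; the only point needing a moment's care is the bookkeeping that cutting at $p$ and $q$ partitions the $m$ cyclic edges into two arcs, which is immediate. An equivalent, equally short route that I might present instead uses the identity $|t|=t^{+}+(-t)^{+}$: since the signed increments $x_{i+1}-x_i$ sum to zero around the cycle, one gets $\sum_i|x_{i+1}-x_i|=2\sum_i(x_{i+1}-x_i)^{+}$, and the positive increments accumulated along either arc from $q$ to $p$ already telescope to at least $x_p-x_q=M-\mu$, giving the bound at once.
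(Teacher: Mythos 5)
Your proposal is correct and is essentially the paper's own proof: the paper likewise normalizes so that $x_1$ is the minimum, picks $j$ with $x_j$ maximal, splits the cyclic sum into the two arcs joining these indices, and applies the triangle inequality to each arc to get $2(x_j-x_1)$. Your alternative remark via $\sum_i|x_{i+1}-x_i|=2\sum_i(x_{i+1}-x_i)^{+}$ is a valid variant, but the main argument coincides with the paper's.
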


\begin{proof}
Assume, without loss of generality, that $x_1$ is the smallest of the numbers
$x_1\longc x_m$, and let $j\in[1,m]$ be so chosen that $x_j$ is the largest
of these numbers. Then, by the triangle inequality,
\begin{multline*}
  |x_2-x_1|+|x_3-x_2|\longp |x_m-x_{m-1}|+|x_1-x_m| \\
        = |x_2-x_1| + |x_3-x_2| \longp |x_j-x_{j-1}| {\hskip 0.75in} \\
        {\hskip 0.5in} + |x_{j+1}-x_j| \longp |x_m-x_{m-1}| + |x_1-x_m| \\
        \ge |x_j-x_1| + |x_1-x_j| = 2(x_j-x_1).
\end{multline*}
\end{proof}

For further references, we record the following observation: if $m\ge 2$ and
$f\in\cF_m$, then, choosing in \refe{m} some of the numbers $x_i$ equal to
$0$, and the rest equal to $1$, in view of the boundary condition
\refe{bound-cond} we get
\begin{equation}\label{e:f(j/m)}
  f(n/m)\le 1;\ n=1\longc m-1.
\end{equation}

\begin{proof}[Proof of Theorem~\reft{isoper}]
We fix $m$ and use induction on $|G|$: assuming that the assertion is true
for any abelian group, the order of which is smaller than $|G|$ (and the
exponent of which divides $m$), we show that it is true for the group $G$.

Without loss of generality, we assume that $S$ is a minimal (under inclusion)
generating subset of $G$. Fix an element $s_0\in S$ and write
$S_0:=S\stm\{s_0\}$. If $S_0=\est$, then $G$ is cyclic of exponent $|G|$,
whence $|G|$ divides $m$ and therefore $f\in\cF_{|G|}$; consequently,
$f(|A|/|G|)\le 1$ by \refe{f(j/m)} and the assertion follows. Assuming now
that $S_0\ne\est$, let $H$ be the subgroup of $G$, generated by $S_0$; thus,
$H$ is proper and non-trivial. Let $l:=[G:H]$. Observe, that the exponent of
the quotient group $G/H$, and hence also its order $l$, are divisors of $m$,
and that $G/H$ is cyclic, generated by $s_0+H$. For $i=1\longc l$ set
$x_i:=|A\cap(is_0+H)|/|H|$.

Fix $i\in[1,l]$. By the induction hypothesis (as applied to the subset
$(A-is_0)\cap H$ of the group $H$ with the generating subset $S_0$), the
number of edges of $\Gam_S(G)$ from an element of $(is_0+H)\cap A$ to an
element of $(is_0+H)\stm A$ is at least $\frac1m\,|H|f(x_i)$. Furthermore,
the number of edges from $(is_0+H)\cap A$ to $((i+1)s_0+H)\stm A$ is at least
\begin{multline*}
  \max \{ |(is_0+H)\cap A|-|((i+1)s_0+H)\cap A| , 0 \} \\
    = |H| \max\{ x_i-x_{i+1}, 0 \}
                = \frac12\,|H|\big( |x_i-x_{i+1}| + x_i-x_{i+1} \big)
\end{multline*}
(where $x_{i+1}$ is to be replaced with $x_1$ for $i=l$). It follows that
\begin{multline}\label{e:loc1}
  \prt_S(A) \ge \frac1m\,|H|\,\big(f(x_1)\longp f(x_l)\big) \\
            + \frac12\, |H|\big(|x_1-x_2|\longp |x_{l-1}-x_l|+|x_l-x_1|\big).
\end{multline}
Choose $i,j\in[1,l]$ so that $x_i$ is the smallest, and $x_j$ is the largest
of the numbers $x_1\longc x_l$. By Lemma \refl{absum} and \refe{loc1} we have
then
\begin{align*}
  \prt_S(A)
      &\ge \frac1m\,|G|\,\frac{f(x_1)\longp f(x_l)}l  + |H|(x_j-x_i) \\
      &\ge \frac1m\,|G|\,\( \frac{f(x_1)\longp f(x_l)}l + (x_j-x_i) \).
\end{align*}
Recalling that $f\in\cF_m$ implies $f\in\cF_l$ in view of $l\mid m$, we get
\begin{align*}
  \prt_S(A)
      &\ge \frac1m\,|G|\,f\( \frac{x_1\longp x_l}l \)  \\
      &= \frac1m\,|G|\,f(|A|/|G|),
\end{align*}
as wanted.
\end{proof}

\section{The classes $\cF_m$: proofs of
  Propositions~\refp{exln1x} and~\refp{Fm-upper}, \\
  and Theorems~\reft{Fmp}, \reft{Fm}, and~\reft{funny}}\label{s:Fm}

Our proof of Proposition~\refp{exln1x} is based on the following lemma (which
the reader is recommended to compare with Theorem~\reft{funny}).
\begin{lemma}\label{l:concave-in-Fm}
Suppose that $f$ is a real-valued function, defined and concave on the
interval $[0,1]$. If the estimate
\begin{equation}\label{e:twostars1}
  f(\lam x_1+(1-\lam)x_2) \le \lam f(x_1)+(1-\lam)f(x_2) + (x_2-x_1)
\end{equation}
holds for all $\lam,x_1,x_2\in[0,1]$ with $x_1\le x_2$, then for any integer
$m\ge 2$ we have $f\in\cF_m$.
\end{lemma}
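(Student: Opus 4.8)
The plan is to show that if $f$ is concave on $[0,1]$ and satisfies the two-point estimate \refe{twostars1}, then the $m$-point relaxed-convexity condition \refe{m} holds automatically, while the boundary condition \refe{bound-cond} must follow from \refe{twostars1} as well. The boundary condition is the quick part: taking $x_1=x_2=0$ in \refe{twostars1} forces $f(0)\le f(0)$, which is vacuous, so instead I would exploit concavity together with the estimate. Concretely, concavity plus \refe{twostars1} should pin down the sign of $f$ at the endpoints; I expect that evaluating \refe{twostars1} with well-chosen degenerate arguments (e.g. $\lam\to0$ or $\lam\to1$ limits, where the slack term $(x_2-x_1)$ survives) yields $f(0)\le0$ and $f(1)\le0$. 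This is the step I would verify most carefully, since it is easy to get a vacuous inequality.

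For the main inequality \refe{m}, I would fix $x_1\longc x_m\in[0,1]$ with $x_1=\min_i x_i$ and $x_m=\max_i x_i$, and write the midpoint $\bar x:=(x_1\longp x_m)/m$ as an iterated convex combination so that concavity of $f$ can be invoked. The cleanest route is to use Jensen-type reasoning on the concave function $f$: the average $\frac1m\sum_i f(x_i)$ is controlled from below by concavity, but here we want an \emph{upper} bound on $f(\bar x)$, so the slack term in \refe{twostars1} must absorb the discrepancy. I would decompose $\bar x$ as a pairwise combination, applying \refe{twostars1} to pair up the extreme values $x_1$ and $x_m$ and using concavity to handle the intermediate ones. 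The key algebraic identity to exploit is that the total linear slack accumulated across the decomposition telescopes to exactly $(x_m-x_1)$, matching the right-hand side of \refe{m}.

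\textbf{The main obstacle} I anticipate is organizing the passage from the two-point inequality \refe{twostars1} to the $m$-point inequality \refe{m} so that the slack terms sum correctly to $(x_m-x_1)$ rather than overshooting. A naive telescoping of pairwise applications of \refe{twostars1} across a chain $x_1,\dotsc,x_m$ would produce a sum of consecutive gaps $\sum_i |x_{i+1}-x_i|$, which can exceed $x_m-x_1$ when the $x_i$ are not monotone; this is precisely the phenomenon that Lemma~\refl{absum} is designed to control, so I would expect the proof to either invoke an averaging argument or to reduce to the case where only the extreme terms $x_1,x_m$ contribute slack while concavity alone handles the rest. The careful bookkeeping here — ensuring the slack contributions from the non-extreme points are dominated by concavity at zero cost — is where the real work lies. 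A convenient reduction is to treat the $m-2$ interior points as a single averaged point via concavity (Jensen), thereby collapsing the problem to essentially a three-point, and then a two-point, instance of \refe{twostars1}, after which the slack is manifestly $x_m-x_1$.
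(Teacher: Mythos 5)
Your plan contains two genuine gaps. The first is the concrete mechanism you commit to for the main step: collapsing the $m-2$ interior points to their average $y:=(x_2\longp x_{m-1})/(m-2)$ via Jensen. For a \emph{concave} $f$, Jensen goes the wrong way for this purpose: it gives $\frac1{m-2}\big(f(x_2)\longp f(x_{m-1})\big)\le f(y)$, so replacing the interior values by $(m-2)f(y)$ \emph{enlarges} the right-hand side of \refe{m}, and having proved the three-point instance with $f(y)$ you cannot descend back to the $m$-point inequality --- the chain of inequalities breaks exactly at this substitution. What does work (and is the paper's proof) is the opposite use of concavity, namely the secant-line bound at the extremes, which your earlier sentence about ``concavity handling the intermediate ones at zero cost'' correctly gestures at: write $x_i=\lam_i x_1+(1-\lam_i)x_m$ with $\lam_i\in[0,1]$, so that concavity gives $f(x_i)\ge \lam_i f(x_1)+(1-\lam_i)f(x_m)$ for every $i$; averaging, with $\lam:=(\lam_1\longp\lam_m)/m$, yields $\frac1m\big(f(x_1)\longp f(x_m)\big)\ge \lam f(x_1)+(1-\lam)f(x_m)$, while $(x_1\longp x_m)/m=\lam x_1+(1-\lam)x_m$, so a \emph{single} application of \refe{twostars1} with this $\lam$ gives \refe{m} with slack exactly $x_m-x_1$. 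In particular, your anticipated obstacle about telescoping slack $\sum_i|x_{i+1}-x_i|$ and Lemma~\refl{absum} is moot here: \refe{twostars1} is invoked once, at the extreme pair $(x_1,x_m)$ only, and Lemma~\refl{absum} plays no role in this lemma (it belongs to the proof of Theorem~\reft{isoper}).

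The second gap is the boundary condition. You rightly suspect that degenerate substitutions in \refe{twostars1} are vacuous, but no choice of $\lam,x_1,x_2$ can do better: the constant function $f\equiv 1$ is concave on $[0,1]$ and satisfies \refe{twostars1}, yet violates \refe{bound-cond}, so $\max\{f(0),f(1)\}\le 0$ genuinely cannot be extracted from the stated hypotheses. The paper's proof of the lemma verifies only the relaxed-convexity condition \refe{m} and tacitly treats \refe{bound-cond} as given; in the one application (Proposition~\refp{exln1x}) it holds trivially, since there $f(0)=f(1)=0$. Your instinct that this step is delicate is correct, but the resolution is to carry \refe{bound-cond} as an assumption (or check it directly for the function at hand), not to derive it from \refe{twostars1}; any proof attempt along the lines you sketch would fail against the constant counterexample.
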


\begin{proof}
We fix integer $m\ge 2$ and real $x_1\longc x_m\in[0,1]$ with
 $\min_i x_i=x_1$ and $\max_i x_i=x_m$, and, assuming \refe{twostars1}, show
that \refe{m} holds true. For $i=1\longc m$ define $\lam_i\in[0,1]$ by
$x_i=\lam_i x_1+(1-\lam_i)x_m$ and let $\lam:=(\lam_1\longp\lam_m)/m$, so
that
  $$ f(x_i) \ge \lam_i f(x_1)+(1-\lam_i)f(x_m) $$
by concavity and, consequently,
\begin{equation}\label{e:fracge1}
  \frac{f(x_1)\longp f(x_m)}m \ge \lam f(x_1)+(1-\lam)f(x_m).
\end{equation}
On the other hand, we have
\begin{align}\label{e:ffracle1}
  f\( \frac{x_1\longp x_m}m \)
    &=   f(\lam x_1+(1-\lam)x_m) \notag \\
    &\le \lam f(x_1)+(1-\lam)f(x_m) + (x_m-x_1)
\end{align}
by \refe{twostars1}, and the result follows by comparing \refe{fracge1} and
\refe{ffracle1}.
\end{proof}

\begin{proof}[Proof of Proposition~\refp{exln1x}]
Since $f$ is concave on $[0,1]$, by Lemma~\refl{concave-in-Fm} it suffices to
prove \refe{twostars1}. The case $\lam\in\{0,1\}$ is trivial, and we assume
below that $0<\lam<1$. Denote by $\Del_\lam(x_1,x_2)$ the difference of the
left-hand side and the right-hand side of \refe{twostars1}. Since the second
partial derivative of $\Del_\lam(x_1,x_2)$ with respect to $x_2$ is
  $$ \frac{\lam(1-\lam)ex_1}{(\lam x_1+(1-\lam)x_2)x_2} \ge 0,
                                                        \quad x_2\in(0,1), $$
the largest value of $\Del_\lam(x_1,x_2)$ for any fixed $\lam$ and $x_1$ is
attained either for $x_2=x_1$, or for $x_2=1$; consequently, we can confine
to these two cases. Indeed, \refe{twostars1} holds true in a trivial way for
$x_2=x_1$, and we therefore assume that $x_2=1$; thus, it remains to prove
that
  $$ \Del_\lam(x_1,1)=f(\lam x_1+1-\lam)-\lam f(x_1)-1+x_1\le 0,
                                                        \ x_1\in[0,1]. $$
To this end we just observe that the second derivative of $\Del_\lam(x_1,1)$
with respect to $x_1$ is
  $$ \frac{\lam(1-\lam)e}{(\lam x_1+1-\lam)x_1} > 0, \quad x_1\in(0,1), $$
and that $\Del_\lam(0,1)=f(1-\lam)-1\le 0$ and $\Del_\lam(1,1)=0$.
\end{proof}

We now turn to the proof of Theorem~\reft{Fmp}.
\begin{lemma}\label{l:continuity}
For every integer $m\ge 2$, all functions from the class $\cF_m$ are
continuous on $(0,1)$.
\end{lemma}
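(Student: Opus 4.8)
The plan is to prove continuity at each interior point $c\in(0,1)$ by squeezing $f(c)$ between $\ell:=\liminf_{z\to c}f(z)$ and $L:=\limsup_{z\to c}f(z)$, using only two specializations of the defining inequality \refe{m} obtained by collapsing its $m$ arguments onto two values. Applying \refe{m} to the system consisting of $m-1$ copies of $c$ together with one nearby point $y$ gives
\[
 f\!\left(c+\tfrac{y-c}{m}\right)\le \tfrac{(m-1)f(c)+f(y)}{m}+|y-c|,
\]
whereas applying it to $m-1$ copies of $y$ together with the single reflected point $y':=c+(m-1)(c-y)$ (whose average over the $m$ points is exactly $c$, and whose spread from $y$ is $m|c-y|$) gives
\[
 f(c)\le \tfrac{(m-1)f(y)+f(y')}{m}+m\,|c-y|.
\]
I restrict to interior $c$ precisely so that $y'$ stays in $[0,1]$; continuity may genuinely fail at the endpoints, where \refe{bound-cond} imposes only a one-sided bound.

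Next I would let $y\to c$ in both displays. In the first, the argument $c+(y-c)/m$ sweeps out a full punctured neighbourhood of $c$, so its $\limsup$ is again $L$; taking $\limsup$ and cancelling yields the upper-semicontinuity bound $L\le\frac{m-1}{m}f(c)+\frac1m L$, that is $L\le f(c)$. In the second, solving for $f(y)$ and taking $\liminf$ (noting $y'\to c$, so $\limsup f(y')=L$) gives $\ell\ge\frac{m}{m-1}f(c)-\frac1{m-1}L$; combined with $L\le f(c)$ this forces $\ell\ge f(c)$. Since always $\ell\le L$, we obtain $\ell=L=f(c)$, which is exactly continuity at $c$. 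This part of the argument invokes nothing beyond \refe{m} and a one-line reflection identity, and is uniform in $m$.

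The hard part, which I expect to be the main obstacle, is the fact that both cancellations above silently require $L<\infty$, i.e.\ that $f$ is \emph{locally bounded above}: if $L=+\infty$ each displayed limit inequality degenerates to the vacuous $\infty\le\infty$. To address this I would first extract from \refe{m} the ``shift'' inequality $f(y)\le\frac1m f(\{my\})+1$, valid for every $y\in[0,1]$: take $\lfloor my\rfloor$ copies of $1$, then $m-1-\lfloor my\rfloor$ copies of $0$, and one copy of the fractional part $\{my\}$, whose average is $y$ and whose spread is at most $1$, and invoke \refe{bound-cond}. Iterating along the orbit $y,\{my\},\{m^2y\},\dotsc$ gives $f(y)\le m^{-k}f(\{m^ky\})+\frac{m}{m-1}$; for rational $y$ this orbit is eventually periodic (eventually $0$ when $y$ is $m$-adic), which already yields $f\le\frac{m}{m-1}$ on the dense set of rationals, and, passing to the supremum, the self-bounding relation $\sup_{[0,1]}f\le\frac1m\sup_{[0,1]}f+1$. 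The remaining—and delicate—step is to upgrade this to a genuine finite local bound at every interior point (equivalently, to rule out an upward blow-up on an exceptional dense set not seen by the rationals); once $L<\infty$ is secured, the squeezing argument of the previous paragraph finishes the proof.
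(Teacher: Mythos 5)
Your squeezing argument is, at its core, the paper's own proof in slightly different packaging: the paper makes a \emph{single} application of \refe{m}, to $m-1$ copies of a point $\xi_k$ realizing the liminf together with the reflected point $m\zeta_k-(m-1)\xi_k$, whose average is the point $\zeta_k$ realizing the limsup; you split the same reflection trick into two specializations, one pinning the limsup below $f(c)$ and one pinning the liminf above it. Modulo finiteness of $L:=\limsup_{z\to c}f(z)$, your two displays and the limit bookkeeping are correct, including the observation that $y'$ stays in $[0,1]$ for $y$ near an interior point $c$ --- which is exactly where the paper arranges $m\zeta_k-(m-1)\xi_k\in(0,1)$ by discarding finitely many terms of its sequences.

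The genuine divergence is the issue you flag in your last paragraph, and there the situation is worse than ``delicate'': the local upper bound you seek does not follow from \refe{bound-cond} and \refe{m}, and in fact it fails. Any additive function $a\colon\R\to\R$ (a $\mathbb{Q}$-linear map built from a Hamel basis) with $a(1)=0$ satisfies the $m$-point Jensen inequality with \emph{equality}, so its restriction to $[0,1]$ lies in $\cF_m$; choosing $a$ nonlinear makes it unbounded above on every subinterval while still vanishing at every rational --- which is precisely why your (correct) orbit bound $f\le m/(m-1)$ on $\mathbb{Q}$ sees nothing of the blow-up. So the step ``upgrade to a finite local bound'' cannot be carried out, and your proposal is incomplete as a proof of the literal statement. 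You should know, however, that the paper's proof has the identical tacit gap: it writes the two sides as $L+o(1)$ and $((m-1)l+L)/m+o(1)$ and cancels as though $l$ and $L$ were finite, which is unjustified on exactly these pathological functions (and the bound $\sup f\le m/(m-1)$ of Corollary \refc{mm-1} cannot be invoked to repair this, since its proof in the paper relies on Lemma \refl{continuity}). The lemma becomes true, and both your argument and the paper's close in the same way, once some regularity is built into the definition of $\cF_m$ --- boundedness, local boundedness above at one point, or measurability, as is standard in the approximate-convexity literature the paper cites. So your diagnosis of where the real difficulty lies is exactly right; within the stated hypotheses the missing step is not fillable, by you or by the paper.
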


\begin{proof}
We fix an integer $m\ge 2$, a function $f\in\cF_m$, and a number
$x_0\in(0,1)$, and show that $f$ is continuous at $x_0$. Let
 $l:=\min\{\liminf_{x\to x_0} f(x), f(x_0)\}$ and
 $L:=\max\{\limsup_{x\to x_0} f(x), f(x_0)\}$. It suffices to prove that
$l\ge L$. For this, choose two sequences $\{\xi_k\}_{k=1}^\infty$ and
$\{\zeta_k\}_{k=1}^\infty$ with all terms in $(0,1)$, converging to $x_0$,
and satisfying $f(\xi_k)\to l$ and $f(\zeta_k)\to L$. In addition, we request
$m\zeta_k-(m-1)\xi_k\in(0,1)$ to hold for any integer $k\ge 1$; in view of
$m\zeta_k-(m-1)\xi_k\to x_0$, this can be arranged simply by dropping a
finite number of terms from each sequence. By \refe{m} we have then
  $$ f(\zeta_k)
       \le \frac{(m-1)f(\xi_k)+f(m\zeta_k-(m-1)\xi_k)}m + o(1) $$
as $k\to\infty$, and it remains to observe that the left-hand side is
$L+o(1)$, while the right-hand side is at most $((m-1)l+L)/m+o(1)$.
\end{proof}

We remark that the functions from the classes $\cF_m$ are not necessarily
continuous at the endpoints of the interval $[0,1]$. Indeed, for any
$f\in\cF_m$ and $a>0$, letting
  $$ f_a(x) = \begin{cases}
                     f(x), &\text{if}\ x\in\{0,1\}, \\
                     f(x)-a &\text{if}\ x\in(0,1),
                   \end{cases} $$
we have $f_a\in\cF_m$, and either $f$ or $f_a$ is discontinuous at $0$ and
$1$. However, a slight modification of the proof of Lemma \refl{continuity}
shows that the potential discontinuities of a function $f\in\cF_m$ at the
endpoints of $[0,1]$ are removable; that is, the limits $\lim_{x\to0+} f(x)$
and $\lim_{x\to1-} f(x)$ exist and are finite.

The following corollary follows readily from Theorem~\reft{Fm} and the
estimate \refe{max-omega}. However, since we have not proved
Theorem~\reft{Fm} yet, we use here an independent argument.
\begin{corollary}\label{c:mm-1}
For any integer $m\ge 2$ and any function $f\in\cF_m$, we have
 $\sup f\le
m/(m-1)$.
\end{corollary}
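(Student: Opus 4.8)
The plan is to establish the bound first on the dense set of $m$-adic rationals $k/m^r$ and then pass to the supremum using the continuity provided by Lemma~\refl{continuity}. The guiding observation is that the relaxed convexity~\refe{m} forces on any $f\in\cF_m$ a one-step inequality that parallels the functional equation of Lemma~\refl{takagi} for $\ome_m$, except that the additive term is now controlled by $m^{1-r}$ rather than by $m^{-r}$. Summing the geometric series $\sum_{r\ge1}m^{1-r}=m/(m-1)$ will then yield exactly the asserted constant, in a manner entirely analogous to how \refe{max-omega} arises for $\ome_m$.

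First I would record the one-step estimate. Fix $r\ge2$ and an integer $k$ with $0\le k\le m^r$, and write $k=tm+\rho$ with $\rho\in\{0\longc m-1\}$. If $\rho=0$ there is nothing to do, since then $k/m^r=t/m^{r-1}$ is already an $(r-1)$-level point. If $\rho\in[1,m-1]$, I apply \refe{m} to the system consisting of $m-\rho$ copies of $t/m^{r-1}$ together with $\rho$ copies of $(t+1)/m^{r-1}$. These numbers lie in $[0,1]$ (the constraint $k\le m^r$ together with $\rho\ge1$ forces $t+1\le m^{r-1}$), their minimum and maximum are $t/m^{r-1}$ and $(t+1)/m^{r-1}$, and their arithmetic mean is precisely $k/m^r$; hence \refe{m} gives
\[
  f\(\frac{k}{m^r}\) \le \frac{(m-\rho)\,f\(\frac{t}{m^{r-1}}\)
       + \rho\,f\(\frac{t+1}{m^{r-1}}\)}{m} + m^{1-r}.
\]

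Next I introduce $M_r:=\max\{f(k/m^r)\colon 0\le k\le m^r\}$. The displayed inequality (used for $\rho\ge1$), the trivial case $\rho=0$, and the boundary values $f(0),f(1)\le0\le M_{r-1}$ from \refe{bound-cond} together show that $M_r\le M_{r-1}+m^{1-r}$ for all $r\ge2$. The base case is $M_1\le1=m^{1-1}$, which is exactly \refe{f(j/m)} combined with~\refe{bound-cond}. Iterating the recursion gives $M_r\le\sum_{j=1}^r m^{1-j}\le m/(m-1)$ for every $r$, so $f(x)\le m/(m-1)$ at every $m$-adic rational $x\in[0,1]$. Since these rationals are dense in $[0,1]$ and $f$ is continuous on $(0,1)$ by Lemma~\refl{continuity}, the bound extends to all of $(0,1)$, and at the endpoints $f(0),f(1)\le0$; therefore $\sup f\le m/(m-1)$.

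The argument is elementary once the right $m$-element system is fed into~\refe{m}, so I do not anticipate a genuine obstacle. The two places needing a little care are the verification that the chosen points remain in $[0,1]$ (this is where the constraint $k\le m^r$ is used) and the bookkeeping of the term arising from $(x_m-x_1)$: it must come out to be exactly $m^{1-r}$, since it is this precise value that makes the telescoping geometric sum converge to $m/(m-1)$ rather than to a weaker constant.
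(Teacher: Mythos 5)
Your proof is correct and is essentially the paper's own argument: the same application of \refe{m} to $m-\rho$ copies of $t/m^{r-1}$ and $\rho$ copies of $(t+1)/m^{r-1}$, the same telescoping of the additive terms $m^{1-r}$ into the geometric sum $\sum_{j\ge1}m^{1-j}=m/(m-1)$, and the same reduction to $m$-adic rationals via Lemma~\refl{continuity} (the paper phrases the induction as $f(n/m^r)\le 1+1/m\longp 1/m^{r-1}$ rather than via $M_r$, a purely cosmetic difference). One tiny remark: your parenthetical claim $0\le M_{r-1}$ need not hold (e.g.\ $f\equiv-1$ lies in $\cF_m$), but it is also not needed, since a convex combination of values each at most $M_{r-1}$ is at most $M_{r-1}$ regardless of sign.
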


\begin{proof}
By Lemma \refl{continuity}, it suffices to show that $f(n/m^r)\le m/(m-1)$
holds for all integer $r\ge 0$ and $n\in[0,m^r]$. Indeed, using induction on
$r$, we prove the slightly stronger estimate
  $$ f(n/m^r) \le 1+1/m \longp 1/m^{r-1}. $$
For $r=0$ this reduces to the boundary condition \refe{bound-cond}. Assuming
that $r\ge 1$ and $n$ is not divisible by $m$, write $n=tm+\rho$ with integer
$t$ and $\rho\in[1,m-1]$. Then
  $$ \frac n{m^r} = \frac{(m-\rho)(t/m^{r-1})+\rho\,((t+1)/m^{r-1})}m $$
so that by \refe{m} and the induction hypothesis,
\begin{align*}
  f\(\frac{n}{m^r}\)
    &\le \(1-\frac \rho m\)f\(\frac{t}{m^{r-1}}\)
               + \frac \rho m\, f\(\frac{t+1}{m^{r-1}}\) + \frac1{m^{r-1}} \\
    &\le \(1-\frac \rho m\) \(1+\frac1m \longp \frac1{m^{r-2}} \) \\
    &\phantom{ (1-\ ) }
           + \frac \rho m\, \(1+\frac1m \longp \frac1{m^{r-2}} \)
                                                        +  \frac1{m^{r-1}} \\
    &= 1+\frac1m\longp \frac1{m^{r-1}}.
\end{align*}
\end{proof}

\begin{proof}[Proof of Theorem~\reft{Fmp}.]
With Corollary \refc{mm-1} in mind, we set
  $$ F_m(x) := \sup \{ f(x)\colon f\in\cF_m \},\quad x\in[0,1]. $$
In view of Proposition~\refp{exln1x}, we have $F_m(0)\ge 0$, $F_m(1)\ge 0$,
and $F_m(x)>0$ for $x\in(0,1)$; indeed, $F_m(0)=F_m(1)=0$ by
\refe{bound-cond}. We now show that
\begin{equation}\label{e:FmincFm}
  F_m\in\cF_m;
\end{equation}
this will immediately imply continuity of $F_m$ on $(0,1)$ (by
Lemma~\refl{continuity}) and show that $F_m(x)=F_m(1-x)$ (since if $f$
belongs to $\cF_m$, then so does the function $x\mapsto f(1-x)$).

To prove \refe{FmincFm} we notice that, given $\eps>0$ and
 $x_1\longc x_m\in[0,1]$ with $\min_i x_i=x_1$ and $\max_i x_i=x_m$, we can find
$f\in\cF_m$ such that
  $$ F_m\(\frac{x_1\longp x_m}m\) \le f\(\frac{x_1\longp x_m}m\)+\eps, $$
and then, by \refe{m},
\begin{align*}
  F_m\(\frac{x_1\longp x_m}m\)
     &\le \frac{f(x_1)\longp f(x_m)}m\ + (x_m-x_1) + \eps \\
     &\le \frac{F_m(x_1)\longp F_m(x_m)}m\ + (x_m-x_1) + \eps.
\end{align*}
Taking the limits as $\eps\to 0$ gives
  $$ F_m\(\frac{x_1\longp x_m}m\)
                     \le \frac{F_m(x_1)\longp F_m(x_m)}m\ + (x_m-x_1), $$
whence $F_m\in\cF_m$.

To complete the proof it remains to show that $F_m$ is continuous at the
endpoints of the interval $[0,1]$. As remarked above, a slight modification
of the proof of Lemma~\refl{continuity} shows, in view of \refe{FmincFm},
that the limits
 $\lim_{x\to 0+} F_m(x)$ and $\lim_{x\to 1-} F_m(x)$ exist and are finite.
Moreover, from $F_m(x)=F_m(1-x)$ it follows that these limits are equal to
the same number $L$, and we want to show that $L=0$. Since $F_m$ is positive
on $(0,1)$, we have $L\ge 0$. To show, on the other hand, that $L\le 0$, we
observe that if $\{\xi_k\}_{k=1}^\infty$ is a sequence satisfying $\xi_k\to
0$ and $\xi_k\in(0,1/m]$ for any $k\ge 1$, then, by \refe{FmincFm} and
\refe{m},
  $$ L+o(1) = F_m(\xi_k) \le \frac{(m-1)F_m(0)+F_m(m\xi_k)}m + o(1)
                                                     = \frac1m\,L + o(1) $$
as $k\to\infty$.
\end{proof}

\begin{proof}[Proof of Theorem~\reft{funny}]
Considering $x<y$ fixed, let
  $$ f_{x,y}(\lam) := \frac1{y-x}
            \, \big( f(\lam x+(1-\lam)y)-\lam f(x)-(1-\lam)f(y) \big). $$
Fix arbitrarily $\lam_1\longc\lam_m\in[0,1]$ with $\min_i\lam_i=\lam_1$ and
$\max_i\lam_i=\lam_m$, and write $x_i:=\lam_i x+(1-\lam_i)y;\ i\in[1,m]$.
Notice that $\min_i x_i=x_m$ and $\max_i x_i=x_1$, and if $x,y\in[0,1]$, then
also $x_1\longc x_m\in[0,1]$. Hence, by \refe{m},
\begin{align*}
  (y-x) &f_{x,y}\(\frac{\lam_1\longp\lam_m}{m}\) \\
    &=   f\( \frac{x_1\longp x_m}m \)
                  - \frac1m\,\sum_{i=1}^m \( \lam_if(x)+(1-\lam_i)f(y) \) \\
    &\le \frac1m\,\sum_{i=1}^m
             \big( f(x_i) - \lam_if(x) - (1-\lam_i)f(y) \big) + (x_1-x_m) \\
    &=   \frac{y-x}m\, \sum_{i=1}^m f_{x,y}(\lam_i) + (y-x)(\lam_m-\lam_1).
\end{align*}
This shows that $f_{x,y}\in\cF_m$. Consequently, $f_{x,y}(\lam)\le F_m(\lam)$
by the extremal property of the function $F_m$ (cf.~Theorem~\reft{Fmp}) and
the assertion follows.
\end{proof}

\begin{proof}[Proof of Theorem~\reft{Fm}]
By continuity of the functions $\ome_m$ and $F_m$ (see Theorem~\reft{Fmp}),
to show that $F_m\le m\ome_m$ it suffices to prove that for any integer $r\ge
0$ and $n\in[0,m^r]$, we have $F_m(n/m^r)\le m\ome_m(n/m^r)$. We use
induction on $r$, and for each $r$ prove the assertion for all $n\in[0,m^r]$.

The case $r=0$ is immediate from $F_m(0)=0=m\ome_m(0)$ and
$F_m(1)=0=m\ome_m(1)$. For $r\ge 1$ we assume, without loss of generality,
that $n$ is not divisible by $m$, and write $n=mt+\rho$ with integer $t$ and
$\rho\in[1,m-1]$. From $F_m\in\cF_m$, the induction hypothesis, and
Lemma~\refl{takagi} we have then
\begin{align*}
  F_m(n/m^r) &\le \frac{(m-\rho)\,F_m(t/m^{r-1})
                       + \rho\,F_m((t+1)/m^{r-1})}m \, + \, \frac1{m^{r-1}} \\
           &\le (m-\rho)\,\ome_m\(\frac{t}{m^{r-1}}\)
                       + \rho\,\ome_m\(\frac{t+1}{m^{r-1}}\) \, + \, \frac1{m^{r-1}} \\
           &=   m\ome_m\(\frac{n}{m^r}\),
\end{align*}
as wanted.

Next, we prove that $F_m=m\ome_m$ for $m\in\{2,3,4\}$. The case $m=2$ follows
from the estimate $F_2\le 2\ome_2$ which we have just obtained and
Boros-P\'ales inequality \refe{bp}, showing that $2\ome_2\in\cF_2$ and,
therefore, $F_2\ge 2\ome_2$. Similarly, the case $m=3$ follows from
 $F_3\le 3\ome_3$ and Theorem~\reft{bp3} showing that
$3\ome_3\in\cF$. For the case $m=4$ we notice that, in view of $\cF_4=\cF_2$
and \refe{omega4=omega2},
  $$ F_4 = F_2 = 2\ome_2 = 4\ome_4. $$

It remains to show that $F_m\ne m\ome_m$ for $m\ge 5$. To this end we observe
that in this case $4/m^2\le 1/m\le 4/m\le 1-1/m$, whence
  $$ m\ome_m(4/m^2) = m\|4/m^2\|_{1/m} + \|4/m\|_{1/m} = 5/m, $$
whereas, by \refe{f(j/m)}, \refe{m}, and $F_m(0)=0$,
  $$ F_m\(\frac4{m^2}\) \le \frac{(m-2)F_m(0)+2F_m(2/m)}m + \frac2m
                                                             \le \frac4m. $$
\end{proof}

In connection with Theorem~\reft{Fm} we remark that the estimate
 $F_m\le m\ome_m$ and the inequality $F_m\ne m\ome_m$ for $m\ge 5$ also follow
from Theorems~\reft{lex} and \reft{isoper}, the latter of them applied with
$f=F_m$, and the well-known and easy-to-verify fact that the sets
$A=\cI_n(C_m^r)$ do \emph{not} minimize the quantity $\prt_S(A)$ for $m\ge
5$. This is yet another indication of the intrinsic relation between the
discrete isoperimetric problem and the functions $\ome_m$ and $F_m$.

Finally, we prove Proposition~\refp{Fm-upper}.
\begin{proof}[Proof of Proposition~\refp{Fm-upper}]
Suppose that $f$ is a real-valued function, defined on the interval $[0,1]$
and satisfying the boundary condition \refe{bound-cond} and the inequality
\refe{twostars1}
%\begin{equation}\label{e:dejavu}
%  f(\lam x_1+(1-\lam)x_2) \le \lam f(x_1)+(1-\lam)f(x_2) + (x_2-x_1)
%\end{equation}
for all $\lam,x_1,x_2\in[0,1]$ with $x_1\le x_2$. For real $\xi\in[0,1]$ and
integer $k\ge 1$, applying \refe{twostars1} with $x_1=0,\ x_2=\xi^{k-1}$, and
$\lam=1-\xi$, we obtain
  $$ f(\xi^k)=f(\lam x_1+(1-\lam)x_2)\le (1-\lam)f(x_2) + x_2
                                             = \xi^{k-1}+\xi f(\xi^{k-1}); $$
iterating,
\begin{equation}\label{e:kx^k-1}
  f(\xi^k) \le 2\xi^{k-1} + \xi^2f(\xi^{k-2}) \le \cdots \le k\xi^{k-1}.
\end{equation}
For $x\in(0,1)$, we use the resulting estimate with $k:=\lcl\ln(1/x)\rcl$ and
$\xi:=x^{1/k}$ to get
  $$ f(x) < (1+\ln(1/x))\,x\cdot x^{-1/k} \le ex\ln(e/x). $$
To complete the proof it remains to observe that, by Corollary \refc{mm-1},
we have $F_m\le m/(m-1)$, and therefore Theorem~\reft{funny} shows that the
function $f=(1-m^{-1})F_m$ satisfies \refe{twostars1}.
\end{proof}

\section*{Appendix: proof of inequalities
  \refe{ome2-bounds}--\refe{omem-bounds}.}

We prove here inequalities \refe{ome2-bounds}, \refe{ome3-bounds}, and
\refe{omem-bounds}; inequality \refe{ome4-bounds} is immediate from
\refe{ome2-bounds} and \refe{omega4=omega2}. The proofs use the identities
\begin{equation}\label{e:no-time-to-think}
  \ome_m(x) = \|x\|_{1/m} + \frac1m\,\|mx\|_{1/m} \longp
              \frac1{m^k}\,\|m^k x\|_{1/m} + \frac1{m^{k+1}}\,\ome_m(m^{k+1}x)
\end{equation}
and
\begin{equation}\label{e:par-per}
  \ome_m(n\pm x)=\ome_m(x),
\end{equation}
valid for any integer $m\ge 2$, $k\ge 0$, and $n$, and any choice of the
sign.

\begin{proof}[Proof of the inequality \refe{ome2-bounds}]
As an immediate corollary of \refe{no-time-to-think}, for each $x\in[0,1/2]$
we have $\ome_2(x)=x+\frac12\,\ome_2(2x)$. On the other hand, for any fixed
$C>0$, the function $f_C(x):=x\log_2(C/x)$ satisfies the very same functional
equation: $f_C(x)=x+\frac12\,f_C(2x)$. Hence,
  $$ f_C(x)-\ome_2(x) = \frac12\,\big(f_C(2x)-\ome_2(2x)\big),\ x\in(0,1/2], $$
showing that it suffices to prove the estimates in question in the range
$x\in[1/2,1]$. To establish the lower bound we now observe that
  $$  \ome_2(x) \ge \|x\|+\frac12\,\|2x\|
          =   \begin{cases}
                 1/2 \quad &\text{if}\quad 1/2 \le x \le 3/4, \\
                 2-2x   \quad &\text{if}\quad 3/4 \le x \le 1,
               \end{cases} $$
and using some basic calculus, one verifies easily that the function in the
right-hand side is at least as large as $x\log_2(1/x)$ for all $x\in[1/2,1]$.

Turning to the upper bound, we notice that the function $f_{4/3}$ is
decreasing on the interval $[1/2,\infty)$, and that the largest value
attained by the Takagi function is known to be $\max \ome_2=2/3$ (see
\refb{la} or \refb{ak}). As a result,
  $$ \ome_2(x) \le 2/3 = f_{4/3}(2/3) \le f_{4/3}(x),\ x\in[1/2,2/3], $$
and it remains to prove the upper bound in the case $x\in(2/3,1]$. To this
end, for integer $r\ge 0$ we let
  $$ b_r := 1-\frac14-\frac1{4^2}\longm\frac1{4^r}, $$
and use induction on $r$ to show that $\ome_2(x)\le f_{4/3}(x)$ for all
$x\in[b_{r+1},b_r]$. If $r=0$, then $x\in[3/4,1]$; in this range we have
\begin{align*}
  \ome_2(x)
    &= \|x\| + \frac12\,\|2x\| + \frac14\,\|4x\| + \frac18\,\ome_2(8x) \\
    &\le (1-x) + \frac12\,(2-2x) + \frac14\,\|4x\| + \frac1{12} \\
    &= \frac{25}{12} - 2x + \frac14\,\|4x\|,
\end{align*}
and a simple verification confirms that the expression in the right-hand side
is smaller than $f_{4/3}(x)$ for $x\in[3/4,1]$. Finally, assuming
$x\in[b_{r+1},b_r]$ with $r\ge 1$, we observe that this implies
$4x-2\in[b_r,b_{r-1}]$; hence, using \refe{par-per}, the induction
hypothesis, and the fact that $x\in[b_{r+1},b_r]\seq(2/3,3/4]$, we get
\begin{align*}
  \ome_2(x)
    &=   \|x\| + \frac12\,\|2x\| + \frac14\,\ome_2(4x-2) \\
    &\le (1-x) + \frac12\,(2x-1) + \frac14\,(4x-2)\log_2\frac4{3(4x-2)} \\
    &=   \frac12 + \(x-\frac12\) \log_2\frac4{3(4x-2)} \\
    &\le x\log_2\frac4{3x},
\end{align*}
the last inequality following by observing that both sides are equal at
$x=2/3$ and comparing the derivatives. This completes the proof.
\end{proof}

\begin{proof}[Proof of the inequality \refe{ome3-bounds}]
Similarly to the proof of \refe{ome2-bounds}, writing $f_C(x):=x\log_3(C/x)$,
for every $x\in(0,1/3]$ we have $\ome_3(x)=x+\frac13\,\ome_3(3x)$ and also
$f_C(x)=x+\frac13\,f_C(3x)$. Hence,
  $$ f_C(x)-\ome_3(x) = \frac13\,\big(f_C(3x)-\ome_3(3x)\big),\ x\in(0,1/3], $$
showing that we can assume $x\in [1/3,1]$.

Observing that if $x\in[1/3,4/9]$, then
  $$ \ome_3(x) \ge \|x\|_{1/3} + \frac13\,\|3x\|_{1/3}
                        = \frac13 + \frac13\,(3x-1) = x \ge x\log_3(1/x), $$
and if $x\in[4/9,1]$, then
  $$ \ome_3(x) \ge \|x\|_{1/3} \ge x\log_3(1/x) $$
(straightforward verification is left to the reader), we get the lower bound.

For the upper bound, we can further restrict the range to consider from
$[1/3,1]$ to $[1/3,1/2]$: for once the estimate is established in this
narrower range, it readily extends onto the interval $[1/2,2/3]$ in view of
  $$ \ome_3(x) = \ome_3(1-x),
                        \ f_{3/2}(x) \le f_{3/2}(1-x),\ 0<x\le 1/2, $$
and onto the interval $[2/3,1]$ since for any $x$ in this interval, by
\refe{no-time-to-think} and \refe{par-per} we have
  $$ \ome_3\(\frac{2-x}{3}\) = \frac13 + \frac13\,\ome_3(x),  $$
whence (assuming the upper bound is proved in $[1/3,1/2]$)
\begin{multline*}
  \ome_3(x) = 3 \ome_3\(\frac{2-x}{3}\) - 1
              \le (2-x)\log_3\frac9{2(2-x)} - 1 \le x\log_3\frac3{2x},
                                                        \ x\in[2/3,1].
\end{multline*}
(For the last inequality observe that both sides are equal for $x=1$, and
compare the derivatives).

Thus, it remains to prove the upper bound for $x\in[1/3,1/2]$. To this end,
for integer $r\ge 1$ we let
  $$ b_r := \frac13\longp\frac1{3^r}, $$
and use induction on $r$ to show that $\ome_3(x)\le f_{3/2}(x)$ for all
$x\in[b_r,b_{r+1}]$. If $r=1$, then $x\in[1/3,4/9]$; in view of
\refe{max-omega}, in this range we have
\begin{align*}
  \ome_3(x)
    &= \|x\|_{1/3} + \frac13\,\|3x\|_{1/3} + \frac19\,\|9x\|_{1/3}
                                          + \frac1{27}\,\ome_3(27x) \\
    &\le \frac13 + \frac13\,(3x-1)
         + \frac19\,\min\left\{\frac13, 4-9x\right\} + \frac1{54} \\
    &=   \min\left\{x+\frac1{18},\frac{25}{54} \right\},
\end{align*}
and a simple verification confirms that the expression in the right-hand side
is smaller than $f_{3/2}(x)$ for $x\in[1/3,4/9]$. Assuming now that $r\ge 2$,
we observe that $x\in[b_r,b_{r+1}]$ implies $3x-1\in[b_{r-1},b_r]$; hence, by
the induction hypothesis, for all $x$ in this range we have
\begin{align*}
  \ome_3(x)
    &=   \|x\|_{1/3} + \frac13\,\ome_3(3x-1) \\
    &\le \frac13 + \frac13\,(3x-1)\log_3\frac3{2(3x-1)} \\
    &\le x\log_3\frac3{2x}.
\end{align*}
(For the last inequality compare the values of both sides at $1/2$ and their
derivatives for $1/3<x<1/2$). This completes the proof.
\end{proof}

\begin{proof}[Proof of the inequality \refe{omem-bounds}]
As in the proofs of \refe{ome2-bounds} and \refe{ome3-bounds}, we can confine
to the range $x\in[1/m,1]$ where the upper bound readily follows from
\refe{max-omega}:
  $$ \ome_m(x)\le \frac1{m-1} \le x\log_m(3/2x), \ x\in[1/m,1]. $$
(Notice that the right-hand side is a concave function, hence attains its
minimum at an endpoint.) For the lower bound we observe that the function
$x\log_m(e/mx)$ is decreasing for $x\ge 1/m$, whence
  $$ \ome_m(x) \ge \frac1m \ge x\log_m(e/mx), \quad x\in[1/m,1-1/m] $$
and
  $$ \ome_m(x) \ge 0 > x\log_m(e/mx), \quad x\in[1-1/m,1]. $$
\end{proof}

\section*{Acknowledgement}
The author is grateful to Mikhail Muzychuck for his interest and a number of
important remarks, and to Sergei Bezrukov for generously sharing his
expertise in the discrete isoperimetric problem.

\vfill

\bigskip
\end{document}